\documentclass[10pt,a4paper,oneside,reqno]{amsart}

\usepackage{amsmath}
\usepackage{amssymb}
\usepackage{mathrsfs}
\usepackage{amsthm}
\usepackage[english]{babel}
\usepackage[final]{graphicx}
\usepackage{latexsym}
\usepackage{hyperref}

\usepackage[pagewise]{lineno}

\hypersetup{
    colorlinks=true,
    unicode=false,
    linkcolor=red,
    citecolor=blue,  
    bookmarks=true
    }

\providecommand{\norm}[1]{\lVert#1\rVert}

\newtheorem{teo}{Theorem}[section]
\newtheorem{cor}[teo]{Corolary}
\newtheorem{lem}[teo]{Lemma}
\newtheorem{prop}[teo]{Proposition}
\newtheorem{obs}[teo]{Remark}
\newtheorem{df}[teo]{Definition}
\newtheorem{afi}[teo]{Claim}
\newtheorem{que}{Question}[section]
\theoremstyle{definition}

\title[Dynamical coherence]{Dynamical coherence of partially hyperbolic diffeomorphisms on nilmanifolds isotopic to Anosov}
\author[Luis Pedro Pi\~{n}eyr\'ua]{Luis Pedro Pi\~{n}eyr\'ua}
\thanks{L.P.P was partially supported by CSIC Ini. Mod. 2 Id 72 (2015) and CSIC group 618}
\address{CMAT \\ Facultad de Ciencias \\
Universidad de la Rep\'ublica \\ Uruguay} 
\email{lpineyrua@cmat.edu.uy}
\subjclass[2010]{37C05, 37C20, 37D30}
\keywords{Dynamical coherence, partial hyperbolicity}

\begin{document}

\maketitle

\theoremstyle{plain}

\begin{abstract}
The purpose of this article is to obtain dynamically coherence of partially hyperbolic diffeomorphisms in certain classes of Anosov diffeomorphisms on nilmanifolds, extending a result due to T. Fisher, R. Potrie and M. Sambarino \cite{FPS} on the torus.
\end{abstract}

\section{Introduction}
The main purpose of the theory of dynamical systems is to understand the temporary evolution of a given system. Another way of saying is, given a fixed space and a law that rules the motion on it, try to predict the asymptotic behaviour for the most quantity of possible trajectories. 

In some cases this behaviour turns out trivial because the simplicity of the dynamics, but in other cases the evolution law presents special futures that makes the dynamics unpredictable or chaotic. 
The paradigmatic examples of chaotic systems are called \textit{Anosov diffeomorphisms}: a diffeomorphism $f:M \to M$ is \textit{Anosov} or \textit{globally hyperbolic}, if there is a splitting of the tangent bundle in a direct sum of sub bundles which are $Df$-invariant $TM=E^{s}\oplus E^{u}$ such that $Df$ contracts vectors of $E^{s}$ and $E^{u}$ exponentially in the future and the past respectively (precise definitions are given in section \ref{spre}). 

The property of expansion-contraction called \textit{hiperbolicity} induces some interesting dynamical properties such as expansivity, transitivity, the existence of periodic points of arbitrary large periods and positive metric entropy to name a few. The concept of hyerbolic set was first introduced by S. Smale in \cite{Sm} where he makes a detailed study of hyperbolicity and propose some guiding problems. The most relevant perhaps is the problem of classifying Anosov diffeomorphisms up to conjugacy. We say that two diffeomorphisms $f:M \to M$ and $g:N\to N$ are topologically equivalent or  \textit{conjugated} if there exist a homeomorphism $h:M \to N$ such that $h \circ f=g \circ h$. 

In the late seventies there was a very good knowledge about how this classification was for Anosov sytems due to the works of J. Franks, S. Newhouse and  A. Manning. If we put toghether the works \cite{Fr}, \cite{Ne} we obtain that if $M$ is a connected, compact riemannian manifold of dimension $n$ without boundary and $f:M \to M$ is an Anosov diffeomorphism of codimendion 1, then $M=\mathbb{T}^{n}$ and $f$ is conjugated to a linear Anosov diffeomorphism. On the other hand, in \cite{Fr1} and \cite{Man} the authors proved that if $f:M\to M$ is an Anosov diffeomorphism on a nilmanifold $M$, then $f$ is topologically conjugated to an Anosov automorphism. 

Despite this important results, some questions are left to be answered. For example, is still an open problem to decide which manifolds support Anosov diffeomorphism, and if this diffeomorhpisms are always transitive. 

In the attempt to generalize these results the definition of Anosov is weakened giving place to  \textit{partially hyperbolic diffeomorphisms}. We say that a diffeomorphism $f:M \to M$ is partially hyperbolic if the tangent bundle splits in a direct sum of three $Df$-invariant sub bundles  $TM=E^{ss}\oplus E^{c}\oplus E^{uu}$ such that the sub bundles $E^{ss}$ and $E^{uu}$ contract vectors exponentially in the future and the past respectively, and the center bundle $E^{c}$ has an intermediate behaviour. In this way partially hyperbolic diffeomorphisms are a generalization of Anosov diffeomorphisms (with trivial $E^{c}$ bundle).

As in the Anosov case, we are interested in classifying the partially hyperbolic diffeomorphisms. A key tool in the Franks-Newhouse-Manning classification, is the existence of invariant foliations tangent to the stable/unstable distributions. These foliations always exist for the stable/unstable bundles, both for Anosov and partially hyperbolic diffeomorphisms (\cite{HPS}). However, the central bundle $E^{c}$ is not always integrable, which means, there is no invariant foliation tangent to $E^{c}$ in every point. There are examples of partially hyperbolic diffeomorhpims whose central bundle is not integrable. The first example of this type was a partially hyperbolic dffeomorphisms on a nilmanifold of dimension 6. This example appeared for the first time in \cite{Sm} as an Anosov diffeomorphism in a manifold which is not a torus. Years later A. Wilkinson \cite{W} observed that rearrenging the bundles one can obtain a partially hyperbolic diffeomorphism whose central bundle $E^{c}$ is not integrable because the Frobenius condition fails (section \ref{sexamples}). In \cite{RHRHU} there's a second example of this type in the torus $\mathbb{T}^{3}$. 

We say that a partially hyperbolic diffeomorphism is \textit{dynamically coherent} (DC) if there are invariant foliations, tangent to the central-stable, central-unstable distributions at every point (and in that case, there is a central foliation too).

The first result about dynamically coherence is due to M. Brin \cite{Br} where he proves that an absolute partially hyperbolic diffeomorphism$\footnote{absolut partially hyperbolic is a strong version of partial hyperbolicity.}$ is dynamically coherent if the stable/unstable leaves are quasi isometric in the universal cover. Then, in \cite{BBI} dynamically coherence is obtained for absolute partially hyperbolic diffeomorphisms in the torus $\mathbb{T}^{3}$ by using Brin's criterion. 
In the last years, R. Potrie and A. Hammerlindl in a series of works \cite{Po}, \cite{HamPo}, \cite{HamPo2} proved dynamical coherence for partially hyperbolic diffeomorpfisms in 3 dimensional manifolds with solvable fundamental group modulus a topological obstruction.

Despite these results, when the dimension of the central distribution is greater than one very few is well known. The first result in this direction is due to T. Fisher, R. Potrie and M. Sambarino \cite{FPS} where they obtain dynamically coherence for partially hyperbolic diffeomorphisms isotopic to linear Anosov on tori $\mathbb{T}^{n}$, provided that the whole isotopy path is inside the space of partially hyperbolic diffeomorphisms. In that paper there are no restrictions about the central dimension, and dynamically coherence is obtained for large subsets of partially hyperbolic diffeomorphisms (connected components of linear Anosov diffeomorphisms). They also mention that it would be possible to applied their techniques to the nilmanifold case but this has to be done with some care. The purpose of this paper is answer this in an affirmative way. Next we present the context in which we will work. 

Let $A:M\to M$ be an Anosov automorphism on a nilmanifold $M=G/\Gamma$, where $G$ is a connected, simply connected nilpotent Lie group, and $\Gamma \subset G$ a discrete and cocompact subgroup. The tangent space $T_{e}M$ admits Lie algebra structure and the differential $DA:TM \to TM$ induces a splitting on the tangent bundle of the form $TM=E^{ss}_{A}\oplus E^{ws}_{A}\oplus E^{wu}_{A}\oplus E^{uu}_{A}$ (see section \ref{spre}). There may be many possibilities for the dimension of these bundles. We will suppose through all this work that the \textit{central bundle} $E^{c}_{A}=E^{ws}_{A}\oplus E^{wu}_{A}$ is a Lie subalgebra of $T_{e}M$. This is always the case when the manifold is the torus $\mathbb{T}^{n}=\mathbb{R}^{n}/\mathbb{Z}^{n}$ because $\mathbb{R}^{n}$ is an abelian Lie algebra, and for that reason any linear subspace will be a Lie subalgebra. This represents the only difference from the original case in the torus $\mathbb{T}^{n}$ \cite{FPS} to the general nilmanifold case where the central bundle is not always closed under the Lie bracket operation (section \ref{sexamples}).

We denote $\textnormal{PH}(M)=\{f:M \to M \ \textnormal{partially hyperbolic}\}$. Now given $A$ as above, we are going to consider 
\begin{equation*}
\textnormal{PH}_{A}(M)= \left\{
f \in \textnormal{PH}(M): f \simeq A, \ \textnormal{dim}E^{ss}_{f}=\textnormal{dim}E^{ss}_{A}, \ \textnormal{dim}E^{uu}_{f}=\textnormal{dim}E^{uu}_{A} 
\right\}
\end{equation*} where $f\simeq A$ means the maps are isotopic. Given $f\in \textnormal{PH}_{A}(M)$ we know from \cite{Fr} that there exist a continuous and surjective map $H_{f}:G \to G$ such that $A \circ H_{f}=H_{f}\circ \tilde{f}$, where $\tilde{f}$ denotes the lift of $f$ to the universal cover $G$.
We say that a dynamical coherent diffeomorphism $f \in \textnormal{PH}_{A}(M)$ is \textit{center fibered} (CF) if $H_{f}^{-1}(\tilde{\mathcal{W}}_{A}^{c}(H_{f}(x)))=\tilde{\mathcal{W}}^{c}_{f}(x)$. This means that different center leaves $f$ are sent by $H_{f}$ surjectively to different center leaves of $A$. We are going to note
\begin{equation*}
\textnormal{PH}_{A}^{0}(M)=\left\{ 
\begin{array}{cc}
& \textnormal{connected componentes of PH}_{A}(M) \ \textnormal{which contains a} \ \ \\
& \textnormal{DC and CF partially hyperbolic diffeomorphism} \ \ 
\end{array}
\right\}
\end{equation*}
We remark that the algebraic Anosov $A$ itself is center fibered, so the set $\textnormal{PH}_{A}^{0}(M)$ is a non-empty open set with at least one connected component.
Now we are ready to state the main result of this work:
\begin{teo}\label{teoA}
Every $f\in \textnormal{PH}_{A}^{0}(M)$ is dynamically coherent and center fibered.
\end{teo}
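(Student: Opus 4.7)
My plan is an open and closed argument inside each connected component of $\textnormal{PH}_A^0(M)$. Fix such a component $\mathcal{C}\subset \textnormal{PH}_A^0(M)$ and let $\mathcal{D}\subset \mathcal{C}$ be the set of $f\in\mathcal{C}$ which are dynamically coherent and center fibered. By the very definition of $\textnormal{PH}_A^0(M)$, $\mathcal{D}$ is nonempty; by connectedness it then suffices to show that $\mathcal{D}$ is both open and closed in $\mathcal{C}$, which forces $\mathcal{D}=\mathcal{C}$ and proves Theorem \ref{teoA}. This is precisely the deformation strategy of Fisher--Potrie--Sambarino \cite{FPS} on the torus, and the task is to verify that each step goes through in the present nilmanifold setting.

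The technical backbone is a Burago--Ivanov style construction of branching foliations. For every $f\in\textnormal{PH}_A(M)$ the lift $\tilde f$ sits at uniformly bounded distance from the algebraic model $A$ on the universal cover $G$, and the semiconjugacy $H_f$ stays at bounded distance from the identity. The hypothesis that $E^c_A$ is a Lie subalgebra has the crucial consequence that the algebraic center, center-stable and center-unstable distributions of $A$ integrate to foliations by left cosets of connected Lie subgroups $G^c, G^{cs}, G^{cu}\subset G$: the subalgebra property of $E^{ss}_A$ and $E^{uu}_A$ is automatic from the eigenvalue decomposition of $DA$, while that of $E^{cs}_A$ and $E^{cu}_A$ follows from the $E^c_A$ hypothesis by elementary eigenvalue estimates. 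Using these algebraic foliations together with $\tilde f$-invariant cone fields around $E^{cs}_f, E^{cu}_f$, I would produce $f$-invariant branching foliations $\mathcal{W}^{cs}_f, \mathcal{W}^{cu}_f$ on $G$ whose leaves remain at uniformly bounded Hausdorff distance from left cosets of $G^{cs}$ and $G^{cu}$. Dynamical coherence then amounts to absence of branching of these two foliations, and the center fibered property to the bijective correspondence, via $H_f$, between the leaves of $\mathcal{W}^{cs}_f \cap \mathcal{W}^{cu}_f$ and the cosets of $G^c$.

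For openness of $\mathcal{D}$ I would combine structural stability of the strong stable/unstable foliations, persistence of the branching cs/cu foliations under $C^1$ perturbations, and stability of $H_f$ at bounded distance to conclude that a small perturbation of a DC and CF diffeomorphism remains DC and CF; no new branching can appear because the algebraic limit has none. For closedness I would take Hausdorff limits of the branching foliations of a sequence $f_n\in\mathcal{D}$ converging to $f\in\mathcal{C}$, and use uniform boundedness of $H_{f_n}$ together with the proper embeddedness of the algebraic cosets in $G$ to transfer both non-branching and the center fibered property to the limit. The main obstacle, and the only genuinely new input beyond \cite{FPS}, is the absence of a global translation symmetry on $G$: to rule out branching of the center foliation the torus argument uses translation by arbitrary vectors in $\mathbb{R}^n$ to slide a hypothetical second branch onto the first, whereas on $G$ only right multiplication by elements of $G^c$ preserves the algebraic center foliation (again by the Lie subalgebra hypothesis). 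The heart of the proof will therefore be to replace the torus translation argument by a right-$G^c$ translation argument inside a single coset $gG^c$, and to propagate the CF property from a fixed anchor $f_0\in\mathcal{D}$ along paths in $\mathcal{C}$ by combining this uniqueness with the bounded-distance control on $H_f$.
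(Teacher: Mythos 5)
Your overall strategy (an open-and-closed argument in each connected component of $\textnormal{PH}_A^0(M)$) agrees with the paper, but the way you propose to implement it has a genuine gap. You propose to show directly that the set $\mathcal{D}$ of DC and CF diffeomorphisms is $C^1$-open and closed in $\mathcal{C}$, using Burago--Ivanov branching foliations, ``persistence'' of these under perturbation for openness, and Hausdorff limits of them for closedness. None of these steps is routine: branching foliations are not known to persist under $C^1$-perturbation in higher dimensions, Hausdorff limits of foliations can develop branching (so closedness does not follow by taking limits), and the assertion that ``no new branching can appear because the algebraic limit has none'' is exactly what needs to be proved, not an argument. In fact, it is an open problem whether dynamical coherence by itself is $C^1$-open, which is why FPS do not attempt to prove openness and closedness of DC+CF directly. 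The paper instead introduces the intermediate properties SADC (strongly almost dynamically coherent) and $\sigma$-properness, proves an integrability criterion saying SADC $+$ $\sigma$-proper $\Rightarrow$ DC $+$ CF (Theorem \ref{tcdi} and Corollary \ref{ccdi}), proves that DC $+$ CF $\Rightarrow$ SADC $+$ $\sigma$-proper (this is the content of the proof of Theorem \ref{teoA}), and then proves that SADC and $\sigma$-properness are $C^1$-open and closed in $\textnormal{PH}_A(M)$ (Propositions \ref{tsadcayc}, \ref{pspopen}, Theorem \ref{tsadcypayc}). Your proposal omits this entire intermediate layer, which is where the real work lies.

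You also misidentify the main adaptation from the torus to the nilmanifold case. You suggest the key difficulty is replacing a torus translation argument by a right-$G^c$ translation argument to rule out branching, but the paper does not use a translation argument of this kind to kill branching; uniqueness of limits in Theorem \ref{tcdi} comes from injectivity of the projected semiconjugacy $H^{uu}_f$ on $\tilde{\mathcal{W}}^{uu}_f$-leaves. The genuinely new ingredient in the nilmanifold setting is Proposition \ref{ppg}: a global product structure statement for the algebraic foliations by left cosets of $G^{cs}, G^{cu}, G^{ss}, G^{uu}$, which is proved using the group structure of $G$ (and where the hypothesis that $\mathfrak{g}^{cs}$, $\mathfrak{g}^{cu}$, $\mathfrak{g}^c$ are Lie subalgebras is essential to even define these cosets as foliations). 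You correctly flag the Lie subalgebra hypothesis as crucial, but the place it enters --- via GPS of the model foliations and the resulting canonical projections $\Pi^{uu},\Pi^{ss}$ --- is not the place you anticipate. Incidentally, your side claim that the subalgebra property of $E^{cs}_A$, $E^{cu}_A$ ``follows from the $E^c_A$ hypothesis by elementary eigenvalue estimates'' is not correct either: the paper explicitly takes $\mathfrak{g}^{cs}$ and $\mathfrak{g}^{cu}$ being subalgebras as a standing assumption (Remark \ref{obsliesub}), and Proposition \ref{propsubalg} yields only that $\mathfrak{g}^{s}$ and $\mathfrak{g}^{u}$ are subalgebras.
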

For the proof of this theorem it is crucial the hypothesis we have made: the center bundle $E^{c}_{A}$ is a Lie subalgebra. It remains open the question about if the reciprocal is also true:
\begin{que}
Given $f\in \textnormal{PH}(M)$ dynamically coherent and isotopic to an Anosov automorphism $A$. Is the central bundle $E^{c}_{A}$ a Lie subalgebra?
\end{que} 
A positive answer to this question would closed the problem about dynamical coherence in linear Anosov isotopy classes. It will be sufficient to check the behaviour of the Lie bracket of the linear part of $f$ (a purely algebraic condition) to establish dynamically coherence for $f$.  

\textbf{Organization of the paper:} The article is divided as follows. In section 2 we introduce the definitions and necessary preliminaries. In section 3 we present the Borel-Smale-Wilkinson example. In section 4 we deal with global product structure for invariant manifolds. Section 5 is devoted to a dynamical coherence criterion and finally in section 6 we prove the main theorem.

\section{Preliminaries} \label{spre} 

Let $f:M \to M$ be a diffeomorphism in a connected, compact an boundaryless manifolds $M$. We say that $f$ is an \textit{Anosov diffeomorphism} if the following properties hold:  

\begin{itemize}
\item[a)]There exist a splitting of the tangent bundle $TM=E^{s}\oplus E^{u}$ in two $Df$-invariant and continuous subbundles:
\begin{itemize}
\item[i)] $Df_{x}(v^{s})\in E^{s}(f(x)), \ \text{for every} \ v^{s} \in E^{s}(x)$. 
\item[ii)] $Df_{x}(v^{u})\in E^{u}(f(x)), \ \text{for every} \ v^{u} \in E^{u}(x)$.
\end{itemize}
\item[b)] There exist a riemannian metric $\norm{\cdot}$ and constants $C>0$, $\lambda \in (0,1)$ such that:
\begin{itemize}
\item[i)] $\norm{Df^{n}_{x}(v)}\leq C \lambda^{n}\norm{v}$ for every $v \in E^{s}(x)$ and $n >0$.
\item[ii)] $\norm{Df^{-n}_{x}(v)} \leq C \lambda^{n}\norm{v}$ for every $v \in E^{u}(x)$ and $n>0$.
\end{itemize}
\end{itemize} We call $E^{s}$ and $E^{u}$ the stable and unstable subbundles respectively. 

If we add an extra sub bundle to the Anosov definition we obtain what is called partially hyperbolic diffeomorphism: let $M$ be as above, a diffeomorphism $f:M \to M$ is \textit{partially hyperbolic} if the following conditions hold: 

\begin{itemize}
\item[a)] There exist a splitting of the tangent bundle in three $Df-$invariant and continuous subbundles: $TM=E^{ss}_{f}\oplus E^{c}_{f}\oplus E^{uu}_{f}$.
\item[b)] There exist a riemannian metric $\norm{\cdot}$ and constants $C>0$, $\lambda \in (0,1)$ such that for very $x \in M$ and unitary vectors $v^{\sigma}\in E^{\sigma}_{f}(x)$, $\sigma=ss,c,uu$: 
\begin{itemize}
\item[i)] $\lambda^{-1}\norm{Df_{x}v^{ss}}<\norm{Df_{x}v^{c}}<\lambda\norm{Df_{x}v^{uu}}$.
\item[ii)] $\norm{Df^{n}_{x}v^{ss}}\leq C\lambda^{n}$, $\norm{Df^{-n}_{x}v^{uu}}\leq C \lambda^{n}$, $\forall n \geq 0$.
\end{itemize}
\end{itemize}

Until the date the most general examples of Anosov diffeomorphisms are constructed as follows. 
Let $G$ be a connected, simply connected Lie group of dimension $n$ and $A:G \to G$ a Lie group isomorphism. Since the neutral $e$ of $G$ is fixed by $A$, the differential $DA_{e}:T_{e}G \to T_{e}G$ is a linear isomorphism and induces a Lie algebra isomorphism between the corresponding Lie algebras $dA:\mathfrak{g} \to \mathfrak{g}$. This correspondence between $\mathfrak{g}$ and $T_{e}G$ comes from the linear isomorphism $\alpha:\mathfrak{g}\to T_{e}G$ which sends $X\in \mathfrak{g}$ to the vector $X(e)\in T_{e}G$ and it also conjugates the maps $DA_{e}$ and $dA$: $$DA_{e}\circ \alpha = \alpha \circ dA$$ We say that the automorphism $A$ is \textit{Anosov} if the linear transformation $dA: \mathfrak{g} \to \mathfrak{g}$ is hyperbolic, i.e. it has no eigenvalues of modulus equal to one. 
In that the case we can decompose $\mathfrak{g}$ in a direct sum of eigenspaces $\mathfrak{g}^{s}$ and $\mathfrak{g}^{u}$, where $\mathfrak{g}^{s}$ is the sum of the eigenspaces associated to the eigenvalues of modulo smaller than 1, and $\mathfrak{g}^{u}$ is the sum of the eigenspaces associated to the eigenvalues of modulo larger than 1.
Since $dA$ is hyperbolic we obtain that $\mathfrak{g}=\mathfrak{g}^{s}\oplus \mathfrak{g}^{u}$. For this decomposition of $\mathfrak{g}$ there exist an inner product $\langle,\rangle$ in $\mathfrak{g}$ such that its corresponding norm contracts vectors in $\mathfrak{g}^{s}$ and $\mathfrak{g}^{u}$ exponentially for the future and the past respectively. This means there are constants $C > 0$ and $0 < \lambda < 1$ such that:

\begin{itemize}
\item $\norm{dA^{n}(v)} \leq C \lambda^{n}\norm{v}$ for all $v \in \mathfrak{g}^{s}$ and $n >0$.
\item $\norm{dA^{-n}(v)} \leq C \lambda^{n}\norm{v}$ for all $v \in \mathfrak{g}^{u}$ and $n>0$.
\end{itemize}
The hyperbolicity of $dA$ implies that the eigenspaces $\mathfrak{g}^{s}$ and $\mathfrak{g}^{u}$ are Lie subalgebras, i.e. $\mathfrak{g}^{s}$ and $\mathfrak{g}^{u}$ are closed under Lie bracket operation.

\begin{prop} \label{propsubalg}
Let $A:G \to G$ be an Anosov automorphism with a decomposition of the Lie algebra of the form: 
$\mathfrak{g}=\mathfrak{g}^{s} \oplus \mathfrak{g}^{u}$. Then, the eigenspaces $\mathfrak{g}^{s}$ and $\mathfrak{g}^{u}$ are Lie subalgebras.
\begin{proof}
We will prove the stable case $\mathfrak{g}^{s}$. The proof for the unstable case is completely analogous. 
Given $X,Y \in \mathfrak{g}^{s}$, we have to prove that $[X,Y] \in \mathfrak{g}^{s}$.

We know that $\norm{dA^{n}X}$ and $\norm{dA^{n}(Y)}$ goes to 0 when $n \to +\infty$ because $X,Y \in \mathfrak{g}^{s}$. The Lie bracket is a bilinear operation, so we know it is a continuous application. Then $[dA^{n}X,dA^{n}Y]\to 0$ when $n \to +\infty$. Since $dA:\mathfrak{g}\to \mathfrak{g}$ is a Lie algebra homomorphism it preserves the Lie bracket and we can conclude that $dA^{n}([X,Y])=[dA^{n}X,dA^{n}Y] \to 0$ whenever $n \to +\infty$. This proves that $[X,Y]\in \mathfrak{g}^{s}$.
\end{proof}
\end{prop}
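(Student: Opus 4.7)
The plan is to treat the stable case (the unstable case being completely analogous by running time backwards) and to argue via forward iteration that $[X,Y]$ cannot have a nonzero component in $\mathfrak{g}^{u}$.

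First I would observe the algebraic ingredient: because $A:G\to G$ is a Lie group automorphism, its derivative at the identity induces a Lie algebra automorphism $dA:\mathfrak{g}\to\mathfrak{g}$, hence $dA[X,Y]=[dAX,dAY]$ and by induction
\begin{equation*}
dA^{n}[X,Y]=[dA^{n}X,dA^{n}Y] \quad \text{for every } n\ge 0.
\end{equation*}
Next I would use the dynamical ingredient: for $X,Y\in\mathfrak{g}^{s}$ the estimates $\norm{dA^{n}X}\le C\lambda^{n}\norm{X}$ and $\norm{dA^{n}Y}\le C\lambda^{n}\norm{Y}$ hold, so continuity and bilinearity of the bracket yield $\norm{[dA^{n}X,dA^{n}Y]}\le K\lambda^{2n}\norm{X}\norm{Y}$ for some constant $K$. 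In particular $dA^{n}[X,Y]\to 0$ as $n\to+\infty$.

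To conclude that $[X,Y]\in\mathfrak{g}^{s}$, I would decompose $[X,Y]=w^{s}+w^{u}$ according to the splitting $\mathfrak{g}=\mathfrak{g}^{s}\oplus\mathfrak{g}^{u}$. Then $dA^{n}[X,Y]=dA^{n}w^{s}+dA^{n}w^{u}$, where the first summand tends to $0$ and the second grows without bound unless $w^{u}=0$ (by applying the unstable contraction estimate to $dA^{-n}$). Since the sum tends to $0$ and the two direct summands lie in complementary closed subspaces, both summands must tend to $0$, forcing $w^{u}=0$ and hence $[X,Y]=w^{s}\in\mathfrak{g}^{s}$.

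The only mildly delicate point is the last step: one must not be content with $dA^{n}[X,Y]\to 0$ but must invoke the $dA$-invariance of the splitting to rule out an unstable component. Beyond that, the argument is essentially a routine marriage of the homomorphism property of $dA$ with the exponential contraction/expansion defining $\mathfrak{g}^{s}$ and $\mathfrak{g}^{u}$.
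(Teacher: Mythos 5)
Your proposal is correct and follows essentially the same argument as the paper: use the homomorphism property of $dA$ to get $dA^{n}[X,Y]=[dA^{n}X,dA^{n}Y]$, combine it with the contraction estimates and bilinearity of the bracket to conclude $dA^{n}[X,Y]\to 0$, and deduce $[X,Y]\in\mathfrak{g}^{s}$. The only difference is that you spell out the final step (decomposing $[X,Y]=w^{s}+w^{u}$ and showing the unstable component must vanish), which the paper's proof leaves implicit.
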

The isomorphism $\alpha :\mathfrak{g} \to T_{e}G$ allows us to send the inner product of $\mathfrak{g}$ to the tangent space $T_{e}G$:
$$\langle v,w \rangle_{e}=\langle \alpha^{-1}(v),\alpha^{-1}(w)\rangle$$
Then translating the inner product in $T_{e}M$ by left multiplication we obtain an inner product in every point $x\in G$:
$$\langle v,w\rangle_{x}=\langle D(L_{x})_{e}^{-1}(v),D(L_{x})_{e}^{-1}(w)\rangle_{e}$$ 
It's easy to see that this defines a Riemmanian metric, which is invariant under left translations (see for example \cite{DoC}). Now we define the stable distribution $E^{s}(x) \subset T_{x}G$ by $E^{s}(e)=\alpha(\mathfrak{g}^{s})$ and then translating by left multiplication: $E^{s}(x)=D(L_{x})_{e}(E^{s}(e))$. In a similar way we define  $E^{u}(x)$. Let's see that $A:G\to G$ with this splitting and this Riemannian metric is an Anosov diffeomorphism.  

\subsection*{Invariance by the differential map}
Take $v \in E^{\sigma}(x)$, $\sigma=s,u$. By definition we have, 
$$v \in E^{\sigma}(x)= D(L_{x})_{e}(E^{\sigma})=D(L_{x})_{e}(\alpha(\mathfrak{g}^{\sigma}))$$
Then there exist a vector $w^{\sigma}\in \mathfrak{g}^{\sigma}$ such that $v=D(L_{x})_{e}(\alpha(w^{\sigma}))$. It follows that
$$ DA_{x}(v) = DA_{x}(D(L_{x})_{e}(\alpha(w^{\sigma})))=D(A \circ L_{x})_{e}(\alpha (w^{\sigma}))$$
Since $A$ is a group homomorphism, we have $A \circ L_{x}=L_{f(x)}\circ A$ and then 
\begin{eqnarray*}
DA_{x}(v)&=&D(L_{A(x)}\circ A)_{e}(\alpha(w^{\sigma}))= D(L_{A(x)})_{e} \circ (DA_{e}\circ \ \alpha)(\mathfrak{g}^{\sigma})\\
&=&D(L_{A(x)})_{e}\circ (\alpha \ \circ \ dA)(w^{\sigma})
\end{eqnarray*}
On the other hand, $\mathfrak{g}^{\sigma}$ is $dA$-invariant, so we have that $\alpha(dA (w^{\sigma}))\in \alpha (\mathfrak{g}^{\sigma})=E^{\sigma}(e)$.
Finally, 
$$DA_{x}(v)\in D(L_{A(x)})_{e}(E^{\sigma}(e))=E^{\sigma}(A(x))$$ as we wanted to see.

\subsection*{Contraction and expansion} 
We are going to prove the $E^{s}$ case. The proof for $E^{u}$ is analogous. Take $v \in E^{s}(x)$. By definition $D(L_{x})^{-1}_{x}(v)\in E^{s}(e)=\alpha(\mathfrak{g}^{s})$. Then we have that there is a vector $w^{s}\in \mathfrak{g}^{s}$ such that $\alpha(w^{s})=D(L_{x^{-1}})_{x}(v)$.
Then $DA^{n}_{x}(v)\in E^{s}(A^{n}(x))$ and therefore
$$ \norm{DA^{n}_{x}(v)}_{A^{n}(x)}=\norm{D(L_{A^{n}(x)})^{-1}_{A^{n}(x)}(DA^{n}_{x}(v))}_{e}=\norm{D(L_{A^{n}(x)}^{-1}\circ A^{n})_{x}(v)}_{e}
$$
Since $A$ is a group homomorphism we get  
$$L^{-1}_{A^{n}(x)}\circ A^{n}=L_{A^{n}(x)^{-1}}\circ A^{n}=L_{A^{n}(x^{-1})} \circ A^{n}=A^{n}\circ L_{x^{-1}}$$
Then
\begin{eqnarray*}
\norm{DA^{n}_{x}(v)}_{A^{n}(x)}&=&\norm{D(A^{n}\circ L_{x^{-1}})(v)}_{e}=\norm{DA^{n}_{e}\circ D(L_{x^{-1}})_{x}(v)}_{e} \\
&=& \norm{DA^{n}_{e}\circ \alpha(w^{s})}_{e}
\end{eqnarray*}
Finally since $\alpha$ conjugates the maps $DA_{e}$ and $dA$ we obtain:
\begin{eqnarray*}
\norm{DA^{n}_{x}(v)}_{A^{n}(x)} &=& \norm{DA^{n}_{e}\circ \alpha(w^{s})}_{e} =\norm{\alpha \circ dA^{n}(w^{s})}_{e}
= \norm{dA^{n}(w^{s})} \\ 
&\leq & C\lambda^{n}\norm{w^{s}}
= C\lambda^{n}\norm{\alpha(w^{s})}_{e}=C\lambda^{n}\norm{D(L_{x^{-1}})_{x}(v)}_{e} \\
&=& C\lambda^{n}\norm{v}_{x} \qedhere
\end{eqnarray*} 
\begin{flushright}
$\Box$
\end{flushright}
This proves that $A:G \to G$ is an Anosov diffeomorphism in a simply connected manifold. The next result due to N. Jacobson shows the first algebraic/topological obstruction to these constructions. 

\begin{teo} [\cite{J}] \label{Jaco} Let $\phi :\mathfrak{g}\to \mathfrak{g}$ be a Lie algebra automorphism which is hyperbolic as a linear transformation. 
Then $\mathfrak{g}$ is nilpotent. 
\end{teo}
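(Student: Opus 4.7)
The strategy is to complexify and analyze the generalized eigenspace decomposition of the automorphism. Let $\mathfrak{g}_{\mathbb{C}} = \mathfrak{g} \otimes_{\mathbb{R}} \mathbb{C}$; since $\mathfrak{g}$ is nilpotent if and only if $\mathfrak{g}_{\mathbb{C}}$ is, it suffices to work over $\mathbb{C}$. Writing $\mathfrak{g}_{\mathbb{C}} = \bigoplus_{\lambda \in \Lambda} V_\lambda$ for the finite decomposition into generalized eigenspaces of $\phi$, and using that $\phi$ is a Lie algebra automorphism, one obtains the key multiplicative compatibility
$$ [V_\lambda, V_\mu] \subseteq V_{\lambda\mu}, $$
with the convention $V_\gamma = 0$ whenever $\gamma \notin \Lambda$. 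The hyperbolicity hypothesis enters as $|\lambda| \neq 1$ for every $\lambda \in \Lambda$.

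The first technical step is to show that for every eigenvector $x \in V_\lambda$, the adjoint operator $\operatorname{ad}(x)$ is nilpotent. Indeed, $\operatorname{ad}(x)^k$ sends $V_\nu$ into $V_{\lambda^k \nu}$, and because $|\lambda| \neq 1$ the moduli $|\lambda^k \nu| = |\lambda|^k |\nu|$ escape the finite set $\{ |\gamma| : \gamma \in \Lambda \}$ as soon as $k$ is large enough. Hence $\operatorname{ad}(x)^k$ vanishes for $k$ sufficiently large.

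The proof then proceeds by induction on $\dim \mathfrak{g}$. One aims to exhibit a nonzero element of the center $Z(\mathfrak{g}_{\mathbb{C}})$; once this is done, $Z$ is $\phi$-invariant, the induced automorphism on the quotient $\mathfrak{g}_{\mathbb{C}} / Z$ is again hyperbolic and acts on a Lie algebra of strictly smaller dimension, so the inductive hypothesis gives nilpotency of the quotient, which pulls back to nilpotency of $\mathfrak{g}_{\mathbb{C}}$ since $Z$ is annihilated by every adjoint action. The production of a central element is the content of an Engel-type argument: one considers a Lie subalgebra of $\mathfrak{gl}(\mathfrak{g}_{\mathbb{C}})$ built from the nilpotent adjoint operators of eigenvectors and locates a nonzero common null vector.

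The main obstacle lies precisely in this Engel step: although each generator $\operatorname{ad}(x)$ (for $x$ an eigenvector) is nilpotent, a sum of nilpotent operators is not automatically nilpotent, so controlling the full Lie subalgebra they generate requires care. The resolution exploits the multiplicative grading $[V_\lambda, V_\mu] \subseteq V_{\lambda \mu}$, which guarantees that iterated Lie brackets of adjoints of eigenvectors remain adjoints of eigenvectors (when nonzero). Coupled with the hyperbolicity hypothesis, this lets one bound uniformly the nilpotency index and close the argument. It is exactly at this point that the stronger condition $|\lambda| \neq 1$, rather than merely $\lambda \neq 1$, becomes essential.
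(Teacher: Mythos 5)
The paper quotes this result from Jacobson \cite{J} and gives no proof of its own, so there is no argument in the paper to compare against; I assess your proposal on its own terms. Your outline follows the standard route: complexify, use the multiplicative grading $[V_\lambda,V_\mu]\subseteq V_{\lambda\mu}$ coming from the automorphism property, deduce from $|\lambda|\neq 1$ that $\operatorname{ad}(x)$ is nilpotent for every $x$ in a single generalized eigenspace $V_\lambda$, and then attempt an Engel-plus-induction argument to produce a central element. The complexification, the grading, the nilpotency of homogeneous adjoints, and the reduction from ``$\mathfrak{g}_{\mathbb{C}}/Z$ nilpotent'' to ``$\mathfrak{g}_{\mathbb{C}}$ nilpotent'' are all correct.

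The gap sits exactly at the step you flag as the main obstacle, and the resolution you propose does not close it. Engel's theorem in the form you need (existence of a nonzero common null vector for $\operatorname{ad}(\mathfrak{g}_{\mathbb{C}})$) requires that \emph{every} element of that Lie algebra be nilpotent, not only the generators $\operatorname{ad}(x)$ with $x$ homogeneous, and the claim that the multiplicative grading ``lets one bound uniformly the nilpotency index and close the argument'' is not a valid substitute. A uniform bound on the nilpotency index of a bracket-closed generating set does not control the algebra it generates: the elementary matrices $E_{ij}$ with $i<j$ (and their scalar multiples) form a bracket-closed set all of whose elements square to zero, yet they generate the full strictly upper-triangular algebra, whose nilpotency index grows with the matrix size. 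Similarly, the lower central series of $\mathfrak{g}_{\mathbb{C}}$ is not directly controlled by the grading, since products $\lambda_1\cdots\lambda_N$ can remain in the spectrum for arbitrarily large $N$ once both moduli $<1$ and $>1$ are present. The ingredient that actually finishes the argument is Jacobson's theorem on \emph{weakly closed sets} of nil linear transformations (a genuine strengthening of Engel, also due to Jacobson): if $W\subset\operatorname{End}(V)$ is closed under $[\,\cdot\,,\cdot\,]$ and consists of nilpotent operators, then the associative algebra generated by $W$ is nilpotent, and in particular there is a nonzero vector annihilated by all of $W$. Your observation that iterated brackets of adjoints of homogeneous elements remain adjoints of homogeneous elements is precisely the verification that $W=\{\operatorname{ad}(x): x\in V_\lambda,\ \lambda\in\Lambda\}$ is weakly closed; but you must then invoke (or reprove) that theorem rather than appeal to a bound on nilpotency indices. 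Until that is supplied, the inductive production of a central element, and with it the whole proof, does not go through.
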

This theorem says that $\mathfrak{g}$ must be a nilpotent Lie algebra, and this occurs if and only if the group $G$ is nilpotent (\cite{SW}). For that reason we are going to work exclusively with nilpotent Lie groups from now on .  

To obtain an example on a compact manifold let's suppose that $G$ has a discrete and cocompact subgroup $\Gamma$ which is $A$-invariant: $A(\Gamma)=\Gamma$. Then $A$ induces a map $f_{A}$ in the corresponding quotient space $M=G/\Gamma$, given by 
$$f_{A}:M \to M \ , \ \ f_{A}(x \cdot \Gamma)=A(x) \cdot \Gamma$$
It's easy to see that $f_{A}$ is well defined, because $A$ is a homomorphism and $\Gamma$ is $A$-invariant. It is also invertible because $f^{-1}_{A}=f_{A^{-1}}$. The invariance of the decomposition and the metric by left translations (and in particular of $\Gamma$) implies that we can send the riemmanian metric of $G$ to the quotient $M=G/\Gamma$ obtaining an Anosov diffeomorphism $f_{A}:G/\Gamma \to G/\Gamma$. We call $M=G/\Gamma$ a \textit{nilmanifold}, that is a riemannian manifold which is the quotient of a connected simply connected nilpotent Lie group by the action of a discrete and cocompact subgroup.
The diffeomorphisms $f_{A}$ obtained this way are called $\textit{Anosov automorphisms}$ or $\textit{hyperbolic automorphisms}$. In section \ref{sexamples} we'll see a few examples.

In some cases, when the dimension of $M$ is bigger than three, we can decompose the stable and unstable subalgebras in the form $\mathfrak{g}^{s}=\mathfrak{g}^{ss}\oplus \mathfrak{g}^{ws}$ and $\mathfrak{g}^{u}=\mathfrak{g}^{wu}\oplus \mathfrak{g}^{uu}$ obtaining a dominated splitting of the Lie algebra of the form: 
$$\mathfrak{g}=\mathfrak{g}^{ss}\oplus \mathfrak{g}^{ws}\oplus \mathfrak{g}^{wu}\oplus \mathfrak{g}^{uu}$$
We are going to call $\mathfrak{g}^{ss}$, $\mathfrak{g}^{ws}$, $\mathfrak{g}^{wu}$ and $\mathfrak{g}^{uu}$ the \textit{strong stable}, \textit{weak stable}, \textit{weak unstable} and \textit{strong unstable} subspaces respectively. The direct sums $\mathfrak{g}^{c}=\mathfrak{g}^{ws}\oplus \mathfrak{g}^{wu}$, $\mathfrak{g}^{cs}=\mathfrak{g}^{ss}\oplus \mathfrak{g}^{c}$ and $\mathfrak{g}^{cu}=\mathfrak{g}^{c}\oplus \mathfrak{g}^{uu}$ are the \textit{central}, \textit{central stable} and \textit{central unstable} subspaces. In proposition \ref{propsubalg} we saw that $\mathfrak{g}^{s}$ and $\mathfrak{g}^{u}$ are Lie subalgebras, however, we are going to make the following assumption.
 
\begin{obs} \label{obsliesub}
We will assume through all the work that $\mathfrak{g}^{cs}$ and $\mathfrak{g}^{cu}$ are Lie subalgebras. Hence, $\mathfrak{g}^{c}$ is also a Lie subalgebra for being an intersection of Lie subalgebras.
\end{obs}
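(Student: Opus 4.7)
The statement to justify is almost a tautology once the right set identity is isolated, so my plan is deliberately short.

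The first step is to note the identity $\mathfrak{g}^{c} = \mathfrak{g}^{cs} \cap \mathfrak{g}^{cu}$ as subspaces of $\mathfrak{g}$. This follows immediately from the dominated splitting $\mathfrak{g} = \mathfrak{g}^{ss}\oplus \mathfrak{g}^{ws}\oplus \mathfrak{g}^{wu}\oplus \mathfrak{g}^{uu}$: by definition $\mathfrak{g}^{cs} = \mathfrak{g}^{ss}\oplus \mathfrak{g}^{ws}\oplus \mathfrak{g}^{wu}$ and $\mathfrak{g}^{cu} = \mathfrak{g}^{ws}\oplus \mathfrak{g}^{wu}\oplus \mathfrak{g}^{uu}$, and the four summands are linearly independent; hence the two subspaces intersect exactly in $\mathfrak{g}^{ws}\oplus \mathfrak{g}^{wu} = \mathfrak{g}^{c}$. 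As the intersection of two linear subspaces, $\mathfrak{g}^{c}$ is automatically a linear subspace, so the only content left is closure under the Lie bracket.

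The second step is the general observation that the intersection of two Lie subalgebras is a Lie subalgebra. Given $X, Y \in \mathfrak{g}^{c} = \mathfrak{g}^{cs}\cap \mathfrak{g}^{cu}$, the standing assumption that $\mathfrak{g}^{cs}$ is a Lie subalgebra yields $[X,Y]\in \mathfrak{g}^{cs}$, and the same assumption on $\mathfrak{g}^{cu}$ yields $[X,Y]\in \mathfrak{g}^{cu}$. Therefore $[X,Y]\in \mathfrak{g}^{cs}\cap \mathfrak{g}^{cu} = \mathfrak{g}^{c}$, which is exactly the subalgebra property.

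There is no real obstacle; the argument is a two-line consequence of the hypothesis. The one conceptual point worth flagging (not in the proof itself but in the surrounding discussion) is that, unlike Proposition \ref{propsubalg}, one cannot deduce the subalgebra property for $\mathfrak{g}^{cs}$ or $\mathfrak{g}^{cu}$ from the dynamics of $dA$ alone: the contraction argument used there needed every vector in the subspace to be contracted (resp.\ expanded) by iteration, which fails for the center-stable and center-unstable subspaces because they mix contracting and non-contracting directions. This is precisely why the hypothesis has to be imposed by hand, and why the remark only derives the weaker corollary about $\mathfrak{g}^{c}$ from it.
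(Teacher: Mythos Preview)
Your proposal is correct and matches the paper's own justification exactly: the remark in the paper simply asserts that $\mathfrak{g}^{c}$ is a Lie subalgebra ``for being an intersection of Lie subalgebras,'' and you have spelled out precisely this, namely that $\mathfrak{g}^{c}=\mathfrak{g}^{cs}\cap\mathfrak{g}^{cu}$ from the direct-sum decomposition and that intersections of subalgebras are subalgebras. There is nothing more to add; your additional commentary on why the hypothesis cannot be derived dynamically (unlike Proposition~\ref{propsubalg}) is accurate and a useful contextual remark.
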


Once again we can make the same construction and obtain the corresponding distributions $E^{\sigma}(x)=D(L_{x})_{e}(\alpha(\mathfrak{g}^{\sigma}))$ for $\sigma=ss,ws,wu,uu$ and $c$.
With this decomposition we get an Anosov diffeomorphism with a hyperbolic splitting of the form: 
$$TM=E^{ss}\oplus E^{ws}\oplus E^{wu}\oplus E^{uu}$$
with $E^{ss}\oplus E^{ws}$ and  $E^{wu}\oplus E^{uu}$ the stable and unstable distributions. Note that if we call $E^{c}=E^{ws}\oplus E^{wu}$, we get a partially hyperbolic splitting: 
$$TM=E^{ss}\oplus E^{c}\oplus E^{uu}$$
This shows that in this case we can think an Anosov diffeomorphism as a partially hyperbolic one. We will back to this in section \ref{sexamples}. Now we know the group $G$ must be nilpotent, the next result becomes useful.

\begin{teo}\cite{Mal} \label{teomalcev}
\
\begin{enumerate} 
\item A necessary a sufficient condition for a discrete group $\Gamma$ to occur as a cocompact subgroup of a simply connected nilpotent Lie group si that $\Gamma$ be a finitely generated nilpotent group containing no elements of finite order.
\item A necessary and sufficient condition on a nilpotent simply connected Lie group $G$ that there exist a discrete cocompact subgroup $\Gamma$ is that the Lie algebra of $G$ has rational constants of structure in some basis. 
\item \label{mal3} If $\Gamma_{i}$ is a discrete and cocompact subgroup of a simply connected nilpotent Lie group  $G_{i}$, $i=1,2$, then any isomorphism $\Gamma_{1}\to \Gamma_{2}$ can be uniquely extended to an isomorphism $G_{1}\to G_{2}$. 
\end{enumerate}
\end{teo}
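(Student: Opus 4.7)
The plan is to prove the three parts in sequence, all resting on the Mal'cev correspondence between finitely generated torsion-free nilpotent groups and simply connected nilpotent Lie groups. The central tool is the Baker--Campbell--Hausdorff (BCH) formula, which for a nilpotent Lie algebra $\mathfrak{g}$ terminates as a polynomial expression in iterated Lie brackets with rational coefficients. In particular $\exp:\mathfrak{g}\to G$ is a diffeomorphism, and the group law on $G$, read in logarithmic coordinates, is a polynomial with rational coefficients in the structure constants of $\mathfrak{g}$.

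For part (1), necessity is immediate: since $\exp$ is a diffeomorphism, $G$ (and hence $\Gamma$) is torsion-free; $\Gamma$ inherits nilpotency from $G$; and finite generation follows from compactness of a fundamental domain $F$ by taking the finite set of $\gamma\in\Gamma$ with $\gamma F\cap F\neq\emptyset$ and applying a standard ping-pong argument. For sufficiency, I would induct on the nilpotency class of $\Gamma$ using the upper central series, tensoring each abelian layer $Z_{i}(\Gamma)/Z_{i-1}(\Gamma)$ with $\mathbb{R}$ and reassembling the extensions via BCH to realize $\Gamma$ as a cocompact lattice in a simply connected nilpotent Lie group.

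For part (2), necessity uses a strong Mal'cev basis $\{\gamma_{1},\dots,\gamma_{n}\}$ of $\Gamma$ adapted to the lower central series: the vectors $X_{i}=\log\gamma_{i}$ form a basis of $\mathfrak{g}$, and the fact that $\Gamma$ is closed under multiplication, combined with the BCH formula, forces the brackets $[X_{i},X_{j}]$ to have rational coordinates in this basis. For sufficiency, given rational structure constants I would take a suitable $\mathbb{Z}$-span $\Lambda\subset\mathfrak{g}$ (after clearing denominators) and exponentiate under BCH to obtain a subgroup $\Gamma\subset G$, which a direct volume comparison with the Haar measure shows to be cocompact.

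For part (3), extract from each lattice the $\mathbb{Q}$-Lie algebra $\mathfrak{g}_{i,\mathbb{Q}}=\mathrm{span}_{\mathbb{Q}}\{\log\gamma:\gamma\in\Gamma_{i}\}$. An abstract isomorphism $\phi:\Gamma_{1}\to\Gamma_{2}$ translates, via BCH, into a $\mathbb{Q}$-Lie algebra isomorphism $\mathfrak{g}_{1,\mathbb{Q}}\to\mathfrak{g}_{2,\mathbb{Q}}$, which extends uniquely by scalars to an $\mathbb{R}$-Lie algebra isomorphism and then exponentiates to the required $G_{1}\to G_{2}$; uniqueness is automatic because $\log\Gamma_{1}$ spans $\mathfrak{g}_{1}$ over $\mathbb{R}$. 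The main obstacle I expect is the sufficient direction of (2): one must verify that the exponential image of a $\mathbb{Z}$-form genuinely closes under the group law (one typically replaces $\Lambda$ by $N\Lambda$ for $N$ sufficiently divisible so that BCH stays integral) and is actually cocompact rather than merely discrete, which requires a careful layer-by-layer BCH bookkeeping together with a volume argument comparing $\Lambda$ to Haar measure on $G$.
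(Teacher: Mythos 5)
The paper does not prove this theorem at all: it is stated as a black-box citation of Mal'cev's 1951 paper \cite{Mal}, so there is no in-paper argument to compare your proposal against. That said, your sketch is a reasonable outline of the standard modern proof via the Baker--Campbell--Hausdorff formula and the Mal'cev rational structure, as one finds in Raghunathan or Corwin--Greenleaf, and the three parts are attacked in the usual way. A few things worth flagging if you were to flesh it out. In part (1) the word ``ping-pong'' is misplaced: finite generation of $\Gamma$ from a compact fundamental domain $F$ comes from the covering argument that $\{\gamma:\gamma F\cap F\neq\emptyset\}$ generates, not from a ping-pong lemma (which proves freeness, not finite generation). The sufficiency direction of (1) is the genuinely hard step and your sentence compresses a lot: one must first know that the upper central quotients of a finitely generated torsion-free nilpotent group are themselves torsion-free (a theorem of Mal'cev), and then the ``reassembling of extensions via BCH'' requires showing that the defining $2$-cocycles can be extended polynomially to the ambient vector spaces, which is where most of the work lives. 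In part (3), the step ``an abstract isomorphism $\phi:\Gamma_1\to\Gamma_2$ translates, via BCH, into a $\mathbb{Q}$-Lie algebra isomorphism'' is exactly the content of the Mal'cev correspondence being functorial; it is correct, but it is a theorem, not a translation, and deserves to be cited or proved. Your uniqueness argument for (3) is fine once one knows that $\log\Gamma_1$ spans $\mathfrak{g}_1$ over $\mathbb{R}$, which itself uses that the $\mathbb{R}$-span of $\log\Gamma_1$ is a subalgebra by BCH and that a cocompact lattice cannot sit inside a proper closed connected subgroup. None of these are fatal gaps in a sketch, but they are the places where the real content of Mal'cev's theorem sits.
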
 

We have constructed Anosov and partially hyperbolic diffomorphism in an algebraic way. We can do the other way around. Given a diffeomorphism $f:G/\Gamma \to G/\Gamma$ we know from \cite{Hat} that $f$ induces an automorphism in its fundamental group $f_{*}:\Gamma \to \Gamma$. As $\Gamma$ is a discrete and cocompact subgroup and $G$ is nilpotent, we have from the previous Theorem \ref{teomalcev} part  \ref{mal3} that there is a unique isomorphism $A:G\to G$ such that $A|_{\Gamma}=f_{*}$. We call $A$ the \textit{linear part} of $f$. From \cite{Fr} we know that a diffeomorphism $f:M \to M$ is semiconjugated to its linear part $A$, as long as $A$ is hyperbolic. Whenever $f \in \textnormal{PH}_{A}(M)$ it's clear that the linear part of $f$ is $A$. 

As we mentioned in the introduction, one of the most important properties of a hyperbolic spitting is the existence of invariant foliations tangent to each one of the distributions. The solution to the existence of these foliations was given by Frobenius in the $C^{1}$ case, see \cite{Wa} for example. The problem with Anosov and partially hyperbolic diffeomorphisms is that the distributions are only Holder continuous even if the diffeomorphism is $C^{r}$ (see \cite{A}). The answer to this problem is given by the stable manifold theorem. The idea of the proof is due to Haddamard and is called the \textit{graph transformation argument}, see \cite{HPS}.

\section{The Borel-Smale-Wilkinson example} \label{sexamples}

The example we are going to present appeared for the first time in \cite{Sm} and it is attributed by S. Smale to A. Borel. The example orginally was presented as an Anosov diffeomorphism in a compact orientable manifold that is not a torus. Years later A. Wilkinson \cite{W} observed that putting together weak sub bundles, one creates a partially hyperbolic diffeomorphism whose central distribution is not integrable. For a more detailed presentation of these examples see \cite{Sm}, \cite{BuW} or \cite{Ham}. 
We now give a brief description of these examples.

Take $\mathcal{H}$ the Heisenberg group, that is the subgroup of matrices in $\textnormal{SL}(3,\mathbb{R})$ of the form
$$ \left(
\begin{array}{ccc}
1 & x & z \\
0 & 1 & y \\
0 & 0 & 1 
\end{array}\right)
$$
with $x,y,z \in \mathbb{R}$. Identifying $(x,y,z)$ with the upper triangular matrix, the product in $\mathcal{H}$ has the form: $$(x,y,z)\cdot(x',y',z')=(x+x',y+y',z+z'+xy')$$
Now we have that $\mathcal{H}$ is a connected, simply connected, nilpotent Lie group diffeomorphic to $\mathbb{R}^{3}$ and clearly non abelian. Its corresponding Lie subalgebra $\mathfrak{h}$ is generated by the matrices 
$$X=\left(
\begin{array}{ccc}
0 & 1 & 0 \\
0 & 0 & 0 \\
0 & 0 & 0 
\end{array} \right), \ 
Y=\left(
\begin{array}{ccc}
0 & 0 & 0 \\
0 & 0 & 1 \\
0 & 0 & 0 
\end{array} \right), \ 
Z=\left(
\begin{array}{ccc}
0 & 0 & 1 \\
0 & 0 & 0 \\
0 & 0 & 0
\end{array}\right)
$$
These matrices satisfy the following relations: $[X,Z]=[Y,Z]=0, [X,Y]=Z$.
If we identify $(a,b,c)$ with $aX+bY+cZ \in \mathfrak{h}$ the exponential map exp:$\mathfrak{h}\to \mathcal{H}$ is a diffeomorphism and its formula is given by 
$$\textnormal{exp}(a,b,c)=\left(\begin{array}{ccc}
1 & a & c+\frac{1}{2}ab \\
0 & 1 & b \\
0 & 0 & 1
\end{array}\right)$$
Now consider the group $\mathcal{G}=\mathcal{H}\times \mathcal{H}$ with the direct product group structure. We get that $\mathcal{G}$ is a connected, simply connected nilpotent Lie group diffeomorphic to $\mathbb{R}^{6}$. 
Its Lie algebra $\mathfrak{g}=\mathfrak{h}\oplus \mathfrak{h}$ is generated by $\{X_{1},Y_{1},Z_{1},X_{2},Y_{2},Z_{2}\}$. Note that the only non-trivial relations are  
$$[X_{1},Y_{1}]=Z_{1}, \ [X_{2},Y_{2}]=Z_{2}
$$
Now identify $(c,b,a,a',b',c') \in \mathbb{R}^{3}\times \mathbb{R}^{3}$ with $aX_{1}+bY_{1}+cZ_{1}+a'X_{2}+b'Y_{2}+c'Z_{2} \in \mathfrak{g}$.
Take a matrix $A \in \textnormal{SL}(2,\mathbb{Z})$ and suppose that $\lambda >1$ and $\lambda^{-1}<1$ are their eigenvalues. Now $\lambda$ and $\lambda^{-1}$ are units in the ring of integers. The field $\mathbb{Q}(\lambda)$ is a quadratic extension of $\mathbb{Q}$; it's Galois involution $\sigma$ interchanges $\lambda$ and $\lambda^{-1}$. Now if we take $\tilde{\Gamma}\subset \mathfrak{g}$ as the set of vectors of the form:
$$\left( \frac{1}{2}w,v,u,\sigma(u),\sigma(v),\sigma\left(\frac{1}{2}w\right)\right)
$$ with $u,v,w \in \mathbb{Z}[\lambda]$ the ring of algebraic integers in $\mathbb{Q}(\lambda)$. It can be proved that $\tilde{\Gamma}$ is an irreducible and cocompact lattice of $\mathfrak{g}$. Then it's easy to see that $\Gamma=\textnormal{exp}(\tilde{\Gamma})$ is a discrete and cocompact subgroup of $\mathcal{G}$. 
Now for any pair of real numbers $\alpha$ and $\beta$, the linear map $B$ $$B: (c,b,a,a',b',c')\mapsto (c\lambda^{\alpha + \beta},b\lambda^{\beta},a\lambda^{\alpha},a'\lambda^{-\alpha},b'\lambda^{-\beta},c'\lambda^{-\alpha-\beta}) 
$$
is an automorphism of $\mathfrak{g}$ and induces an homomorphism $F_{B}:\mathcal{G}\to \mathcal{G}$ whose derivative at the identity is $B$. If $\alpha, \beta \in \mathbb{Z}$ the automorphism $B$ preserves $\tilde{\Gamma}$ and we obtain a diffeomorphism $f_{B}:\mathcal{G}/\Gamma \to \mathcal{G}/\Gamma$. If one of $\alpha, \beta, \alpha + \beta$ is non zero, then $f_{B}$ is partially hyperbolic  and if all three are non zero, $f_{B}$ is Anosov. Assume that $\alpha+\beta > \beta \geq \alpha >0$. In this case $f_{B}$ is Anosov: the central bundle is trivial, the stable bundle $E^{s}$ is generated by $X_{2}, Y_{2}, Z_{2}$ and the unstable bundle $E^{u}$ by $X_{1}, Y_{1},Z_{1}$. This way we get an Anosov diffeomorphism $f_{B}:\mathcal{G}/\Gamma \to \mathcal{G}/\Gamma$ in a six dimensional nilmanifold that is not a torus (Lie algebra non abelian). 

This is the form in which this example originally appeared in \cite{Sm}, but as we mentioned at the beginning of this section, there are several ways in which one can think about this example. These are the following: 

\begin{itemize}
\item In \cite{W} A. Wilkinson made the following observation: take the stable bundle $E^{s}$ generated  by $Z_{2}$, the unstable bundle $E^{u}$ is generated by $Z_{1}$ and the central bundle $E^{c}$ generated by the remaining fields $X_{1}, Y_{1}, X_{2}$ and  $Y_{2}$. With this splitting $f_{B}$ is a partially hyperbolic diffeomorphism. The interesting thing about this example is that the central bundle $E^{c}$ is not integrable because is not closed under the Lie bracket operation: $[X_{1},Y_{1}]=Z_{1} \in E^{u}$. This implies that $f_{B}$ is not dynamically coherent.

\item A third way of seing this is due to A. Hammerlindl. One chooses the bundle $E^{u}$ to be generated by $Z_{1}$, $Y_{1}$ and $X_{1}$, the central bundle $E^{c}$ generated by $X_{2}$ and $Y_{2}$ and the stable bundle $E^{s}$ generated by $Z_{2}$. Once again $f_{B}$ is not dynamically coherent and moreover, if we take $\alpha=\beta$ we obtain center-bunching (see \cite{BuW} and references therein).

\item Now we are going to see it in fourth way, a much simpler one: the unstable bundle $E^{u}$ is generated by $Z_{1}$, $Y_{1}$, the central bundle $E^{c}$ generated by $X_{1}$, $X_{2}$ and the stable bundle $E^{s}$ generated by $Y_{2}$ and $Z_{2}$. With this splitting the diffeomorphism $f_{B}$ verifies that $\mathfrak{g^{cs}}$, $\mathfrak{g}^{cu}$ and $\mathfrak{g}^{c}$ are Lie subalgebras (they are all closed under the Lie bracket) and so $f_{B}$ is in the hypothesis of Theorem \ref{teoA}.
\end{itemize}

\section{Global product structure}
We mentioned at the end of section \ref{spre} that the stable and unstable manifolds exist thanks to the stable manifold theorem, however in the algebraic case, these manifolds have a simpler description. 

Let $M=G/\Gamma$ be a nilmanifold of dimension $d$ and $\mathfrak{g}$ its corresponding Lie algebra. Take $A:M \to M$ an Anosov automorphism such that $\mathfrak{g}$ has a splitting of the form $\mathfrak{g}=\mathfrak{g}^{ss}\oplus \mathfrak{g}^{ws}\oplus \mathfrak{g}^{wu}\oplus \mathfrak{g}^{uu}$. Recall that $\mathfrak{g}^{c}=\mathfrak{g}^{ws}\oplus \mathfrak{g}^{wu}$, $\mathfrak{g}^{cs}=\mathfrak{g}^{ss}\oplus \mathfrak{g}^{c}$ and $\mathfrak{g}^{cu}=\mathfrak{g}^{c}\oplus \mathfrak{g}^{uu}$ are the central, central-stable and central-unstable subspaces respectively, and according to remark \ref{obsliesub} we are assuming they are all Lie subalgebras. Now for every subalgebra $\mathfrak{g}^{\sigma}$ it corresponds a unique connected Lie subgroup $G^{\sigma}$ which is tangent in every point to the corresponding Lie subalgebra (see for example\cite{SW}). Moreover, since the group $G$ is nilpotent we have that the exponential map is a diffeomorphism, and so the subgroups $G^{\sigma}$ are of the form $G^{\sigma}=\exp(\mathfrak{g}^{\sigma})$ (\cite{K}). For $\sigma=s,ss,u,uu,cs,cu,c$, we have the corresponding foliations $\tilde{\mathcal{W}}^{\sigma}_{A}$ which are defined by $\tilde{\mathcal{W}}^{\sigma}_{A}(x)=L_{x}(G^{\sigma})$ for every $x\in G$, where $L_{x}$ is the left multiplication by the element $x$. The sub index make reference to the automorphism $A$ which is where the subgroups comes from, and also the foliations.

To prove that $\tilde{\mathcal{W}}^{\sigma}_{A}$ is truly a foliation is enough to show that if two leaves intersect each other, then they're both the same leaf. 
Let's suppose that $\tilde{\mathcal{W}}^{\sigma}_{A}(x)\cap \tilde{\mathcal{W}}^{\sigma}_{A}(y)\neq \emptyset$. Then there is a point $p$ such that $p \in \tilde{\mathcal{W}}^{\sigma}_{A}(x)=L_{x}(G^{\sigma})$ and $p \in \tilde{\mathcal{W}}^{\sigma}_{A}(y)=L_{y}(G^{\sigma})$. Then there are points $g_{1}$, $g_{2} \in G^{\sigma}$ such that $x.g_{1}=p=y.g_{2}$ and we can write $x=y.g_{2}.g_{1}^{-1}$. 
Given $q \in \tilde{\mathcal{W}}^{\sigma}_{A}(x)$, there is $g_{3}\in G^{\sigma}$ such that $q=x.g_{3}$. 
But then $q=x.g_{3}=(y.g_{2}.g_{1}^{-1}).g_{3}= y.(g_{2}.g_{1}^{-1}.g_{3})\in L_{y}(G^{\sigma})=\tilde{\mathcal{W}}^{\sigma}_{A}(y)$ because $G^{\sigma}$ is a subgroup.
The choice of $q$ was arbitrary so we have proved that $\tilde{\mathcal{W}}^{\sigma}_{A}(x)\subset \tilde{\mathcal{W}}^{\sigma}_{A}(y)$. Analogously we obtain $\tilde{\mathcal{W}}^{\sigma}_{A}(y)\subset \tilde{\mathcal{W}}^{\sigma}_{A}(x)$. 
This proves that $\tilde{\mathcal{W}}^{\sigma}_{A}$ is indeed a foliation. We note by $\mathcal{W}^{\sigma}_{A}$ to the image of $\tilde{\mathcal{W}}^{\sigma}_{A}$ by the canonical projection $G \to G/\Gamma$.

The next proposition is key in our work. Before getting into the proof we have to introduce a definition.

\begin{df}[Global product structure]
We say that two foliations $\mathcal{F}_{1}$ and $\mathcal{F}_{2}$ in $M=G/\Gamma$ have global product structure \textnormal{(GPS)}, if for every pair of points $x,y\in G$ the leaves $\tilde{\mathcal{F}_{1}}(x)$ and $\tilde{\mathcal{F}_{2}}(y)$ intersect in exactly one point.
\end{df}

\begin{prop} \label{ppg} If we are in the same conditions as above, then:
\
\begin{enumerate}

\item The foliations $\mathcal{W}^{cs}_{A}$ and $\mathcal{W}^{uu}_{A}$ have \textnormal{GPS}.
\item The foliations $\mathcal{W}^{cu}_{A}$ and $\mathcal{W}^{ss}_{A}$ have \textnormal{GPS}.
\item The foliations $\mathcal{W}^{c}_{A}$ and $\mathcal{W}^{uu}_{A}$ have \textnormal{GPS} inside the foliation $\mathcal{W}^{cu}_{A}$.
\item The foliations $\mathcal{W}^{c}_{A}$ and $\mathcal{W}^{ss}_{A}$ have \textnormal{GPS} inside the foliation $\mathcal{W}^{cs}_{A}$.
\item The foliations $\mathcal{W}^{s}_{A}$ and $\mathcal{W}^{u}_{A}$ have \textnormal{GPS}.
\end{enumerate}

\begin{proof}
We are going to prove case 1 and see how the rest of the cases follows the same way.
We have to prove that for every pair of points $x,y \in G$, the intersection $\tilde{\mathcal{W}}^{cs}_{A}(x)\cap \tilde{\mathcal{W}}^{uu}_{A}(y)$ is non-empty and is exactly one point. 

\subsection*{Uniqueness of the intersection} 
Suppose that $\tilde{\mathcal{W}}^{cs}_{A}(x)$ and $\tilde{\mathcal{W}}^{uu}_{A}(y)$ intersect in more than one point. First suppose that $y=e$. 
By hypothesis we know there are points $p,q \in \tilde{\mathcal{W}}^{cs}_{A}(x)\cap \tilde{\mathcal{W}}^{uu}_{A}(e)$. 
By definition of the foliations we have that there exist $g^{cs}_{1}, g^{cs}_{2}\in G^{cs}$ and $g^{uu}_{1}, g^{uu}_{2}\in G^{uu}$ such that: 
$$p=xg^{cs}_{1}=g^{uu}_{1} \ \textnormal{and}\ \ q=xg^{cs}_{2}=g^{uu}_{2}$$
Since $G^{uu}$ and $G^{cs}$ are Lie subgroups of $G$, we have 
$$(g^{uu}_{1})^{-1}=(xg^{cs}_{1})^{-1}=(g^{cs}_{1})^{-1}x^{-1} \in G^{uu}$$ Therefore  
$$(g^{uu}_{1})^{-1}g^{uu}_{2}=(g^{cs}_{1})^{-1}x^{-1}xg^{uu}_{2}=(g^{cs}_{1})^{-1}(g^{cs}_{2}) \in G^{cs}\cap G^{uu}=e$$
and we conclude $p=q$.
For the general case let's suppose there exist points $p,q \in \tilde{\mathcal{W}}^{cs}_{A}(x)\cap \tilde{\mathcal{W}}^{uu}_{A}(y)$. Applying $L_{y^{-1}}$ we take the problem to the previous case obtaining  $y^{-1}p, y^{-1}q \in \tilde{\mathcal{W}}^{cs}_{A}(y^{-1}x)\cap \tilde{\mathcal{W}}^{uu}_{A}(e)$. We deduce that $y^{-1}p=y^{-1}q$ if and only if $p=q$.
This proves the uniqueness of the intersection point. 

\subsection*{Existence of the intersection}
To prove the existence, we are going to make a dynamical proof, although it is possible that there is another way to see this in a more algebraic manner. We are going to use the existence of local product structure (LPS) of the foliations in question. This local product structure do exist trivially in a neighbourhood of $e$ and then in every point since the manifold $G$ is homogeneous (for every pair of points $x,y\in G$ there is an isometry $L_{yx^{-1}}$ sending $x$ to $y$). Let $U_{e}$ be a LPS neighbourhood of $e$ and $U_{x}=L_{x}U_{e}$ the LPS neighbourhood of $x\in G$.

Once again we first assume the case $y=e$. We have to prove that $\tilde{\mathcal{W}}^{uu}_{A}(e)$ and $\tilde{\mathcal{W}}^{cs}_{A}(x)$ have non empty intersection. Trivial cases are when $x\in \tilde{\mathcal{W}}^{u}_{A}(e)$ or $x\in \tilde{\mathcal{W}}^{s}_{A}(e)$. Suppose the first one. Then iterating $A$ to the past we get $N>0$ such that $A^{-N}(x)\in U_{e}$. This implies the existence of a point $p \in \tilde{\mathcal{W}}^{uu}_{A}(e)\cap \tilde{\mathcal{W}}^{cs}_{A}(A^{-N}(x))$. Applying $A^{N}$ we get $A^{N}(p) \in \tilde{\mathcal{W}}^{uu}_{A}(e)\cap \tilde{\mathcal{W}}^{cs}_{A}(x)$, the desired intersection. The case $x\in \tilde{\mathcal{W}}^{s}_{A}(e)$ is exactly the same but iterating to the future.

Let's suppose now that $x\in G$ is neither in $G^{s}$ nor in $G^{u}$. Again using the hyperbolicity  we know that exist $p\in \tilde{\mathcal{W}}^{uu}_{A}(e)$ and $n>0$ such that $A^{n}(x)\in U_{p}$. Then, there is $q\in \tilde{\mathcal{W}}^{uu}_{A}(p)\cap \tilde{\mathcal{W}}^{cs}_{A}(A^{n}(x))$. Since $q\in \tilde{\mathcal{W}}^{uu}_{A}(p)$ and $p\in \tilde{\mathcal{W}}^{u}_{A}(e)$, we have that exist $m>0$ such that $A^{-m}(q)\in U_{e}$. Once again there exist $z\in \tilde{\mathcal{W}}^{uu}_{A}(e)\cap \tilde{\mathcal{W}}^{cs}_{A}(A^{-m}(q))$. Applying $A^{m}$ we have that $A^{m}(z)\in \tilde{\mathcal{W}}^{uu}_{A}(e)\cap \tilde{\mathcal{W}}^{cs}_{A}(q)$. But $q\in \tilde{\mathcal{W}}^{cs}_{A}(A^{n}(x))$, so then $\tilde{\mathcal{W}}^{cs}_{A}(q)=\tilde{\mathcal{W}}^{cs}_{A}(A^{n}(x))$. Hence, $A^{m}(z)\in \tilde{\mathcal{W}}^{uu}_{A}(e)\cap \tilde{\mathcal{W}}^{cs}_{A}(A^{n}(x))$. Applying again $A^{-n}$ we obtain $A^{m-n}(z)\in \tilde{\mathcal{W}}^{uu}_{A}(e)\cap \tilde{\mathcal{W}}^{cs}_{A}(x)$ as we wanted to show.

Interchanging the roles of $x$ and $e$ we obtained that $\tilde{\mathcal{W}}^{uu}_{A}(x)$ and $\tilde{\mathcal{W}}^{cs}_{A}(e)$ have non trivial intersection. 

For the general case pick $x,y \in G$. The previous part applying to the point $x^{-1}y$ gives us the existence of a point $p\in \tilde{\mathcal{W}}^{uu}_{A}(x^{-1}y)\cap \tilde{\mathcal{W}}^{cs}_{A}(e)$. Multiplying by $x$ by left, we obtain that $xp \in \tilde{\mathcal{W}}^{uu}_{A}(y)\cap \tilde{\mathcal{W}}^{cs}_{A}(x)$. 

\subsection*{Conclusion of the proof}
Given two points $x,y \in G$, the existence part tell us that $\tilde{\mathcal{W}}^{uu}_{A}(y)\cap \tilde{\mathcal{W}}^{cs}_{A}(x)\neq \emptyset$ and the uniqueness part that the intersection point must be unique, proving the GPS of the foliations $\mathcal{W}^{uu}_{A}$ and $\mathcal{W}^{cs}_{A}$. This proves case 1.

Case 2 is completely analogous to case 1. 

Case 3 reduces to case 2 (and analogously case 4 to case 1): take two points $x,y$ that belong to the same center-stable leaf. We know for case 2 that exist a unique point $p\in \tilde{\mathcal{W}}^{ss}_{A}(x)\cap \tilde{\mathcal{W}}^{cu}_{A}(y)$. Since $\tilde{\mathcal{W}}^{ss}_{A}(x)\subset \tilde{\mathcal{W}}^{cs}_{A}(x)=\tilde{\mathcal{W}}^{cs}_{A}(y)$, we have that $p \in \tilde{\mathcal{W}}^{cs}_{A}(y)\cap \tilde{\mathcal{W}}^{cu}_{A}(y)=\tilde{\mathcal{W}}^{c}_{A}(y)$. This proves that the leaves  $\tilde{\mathcal{W}}^{ss}_{A}(x)$ and $\tilde{\mathcal{W}}^{c}_{A}(y)$ intersect in a unique point $p \in \tilde{\mathcal{W}}^{cs}_{A}(x)=\tilde{\mathcal{W}}^{cs}_{A}(y)$. 

The proof of case 5 is exactly the same as the previous cases, but with trivial center stable.
\end{proof}
\end{prop}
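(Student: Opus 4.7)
The plan is to prove Case 1 in full---it contains both the algebraic uniqueness argument and the dynamical existence argument using local product structure---and then to extract Cases 2--5 by symmetry and by restriction to leaves. Throughout I use homogeneity of $G$: the left translations $L_g$ are isometries of the invariant metric and permute the leaves of each $\tilde{\mathcal{W}}^{\sigma}_A$ among themselves, since those leaves are the left cosets of the subgroups $G^{\sigma}=\exp(\mathfrak{g}^{\sigma})$.

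For uniqueness I would first apply $L_{y^{-1}}$ to reduce to $y=e$. If both $p=xg_1^{cs}=g_1^{uu}$ and $q=xg_2^{cs}=g_2^{uu}$ lie in $\tilde{\mathcal{W}}^{cs}_A(x)\cap \tilde{\mathcal{W}}^{uu}_A(e)$, the identity
\[
(g_1^{uu})^{-1}g_2^{uu}=(g_1^{cs})^{-1}g_2^{cs}
\]
produces a common element of $G^{uu}\cap G^{cs}$. Since $G$ is simply connected nilpotent, $\exp:\mathfrak{g}\to G$ is a global diffeomorphism, and so $G^{uu}\cap G^{cs}=\exp(\mathfrak{g}^{uu}\cap \mathfrak{g}^{cs})=\{e\}$; hence $p=q$.

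For existence, again reduce to $y=e$. Local product structure of $\tilde{\mathcal{W}}^{uu}_A$ with $\tilde{\mathcal{W}}^{cs}_A$ at $e$ is provided by the stable manifold theorem, and left-translation supplies an LPS box $U_x=L_xU_e$ at every $x\in G$. If $x\in \tilde{\mathcal{W}}^{u}_A(e)$, then $A^{-N}(x)\to e$, so $A^{-N}(x)\in U_e$ for large $N$; LPS produces a point $p\in \tilde{\mathcal{W}}^{uu}_A(e)\cap\tilde{\mathcal{W}}^{cs}_A(A^{-N}(x))$, and $A^N(p)$ is the desired intersection. The case $x\in\tilde{\mathcal{W}}^s_A(e)$ is symmetric, iterating forward. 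For general $x$ I would argue in two stages: first use hyperbolicity to pick $p\in\tilde{\mathcal{W}}^{uu}_A(e)$ and $n>0$ with $A^n(x)\in U_p$, obtain $q\in \tilde{\mathcal{W}}^{uu}_A(p)\cap\tilde{\mathcal{W}}^{cs}_A(A^n(x))$ from LPS, and then exploit $\tilde{\mathcal{W}}^{uu}_A(p)=\tilde{\mathcal{W}}^{uu}_A(e)$ so that backward iteration brings $q$ into $U_e$; a second LPS step followed by a controlled application of $A^{m-n}$ yields a point in $\tilde{\mathcal{W}}^{uu}_A(e)\cap\tilde{\mathcal{W}}^{cs}_A(x)$, using the identity $\tilde{\mathcal{W}}^{cs}_A(q)=\tilde{\mathcal{W}}^{cs}_A(A^n(x))$ to transport the conclusion back to $x$. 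The main obstacle is precisely this bookkeeping: tracking which leaf is preserved under which iterate and chaining the two local product boxes correctly.

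Cases 2--5 then follow quickly. Case 2 is the symmetric statement and uses the same two-step argument. Cases 3 and 4 reduce to Cases 1 and 2 respectively by intersecting with a common $cu$ or $cs$ leaf: e.g.\ for Case 4, taking $x,y$ in a common $cs$-leaf, Case 2 yields a unique $p\in\tilde{\mathcal{W}}^{ss}_A(x)\cap\tilde{\mathcal{W}}^{cu}_A(y)$, and $\tilde{\mathcal{W}}^{ss}_A(x)\subset\tilde{\mathcal{W}}^{cs}_A(x)=\tilde{\mathcal{W}}^{cs}_A(y)$ forces $p\in\tilde{\mathcal{W}}^{cs}_A(y)\cap\tilde{\mathcal{W}}^{cu}_A(y)=\tilde{\mathcal{W}}^{c}_A(y)$. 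Finally, Case 5 is the degeneration in which the center is absent ($cs=ss$, $cu=uu$), and the same argument goes through verbatim.
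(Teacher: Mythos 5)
Your proposal is correct and follows essentially the same route as the paper: an algebraic uniqueness argument via $G^{cs}\cap G^{uu}=\{e\}$ after left-translating to $y=e$, a dynamical existence argument chaining local product structure boxes under iteration of $A$, and reduction of cases 2--5 by symmetry and restriction to a common leaf. (You even quietly fix two minor typos in the paper: the substitution $q=xg_2^{cs}$ in the uniqueness computation and the mislabeling of which of cases 3/4 reduces to which of cases 1/2.)
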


\section{Integrability criterion}

In this section we are going to prove an integrability criterion to obtain dynamically coherence of partially hyperbolic diffeomorphism due to \cite{FPS}. The proofs are similar as in \cite{FPS} but adapted from the torus case to the nilmanifold case. 

\subsection{Canonical projections}

The Proposition \ref{ppg} concerning GPS of the invariant foliations allow us to define canonical projections on the strong stable/unstable foliations. Given $x \in G$, we define the projection $\Pi^{uu}_{x}:G\to \tilde{\mathcal{W}}^{uu}_{A}(x)$ as
$$\Pi^{uu}_{x}(y)=\tilde{\mathcal{W}}^{cs}_{A}(y)\cap \tilde{\mathcal{W}}^{uu}_{A}(x),  \ \ \forall y \in G$$
The map $\Pi^{uu}_{x}$ is the projection on the strong unstable leaf $\tilde{\mathcal{W}}^{uu}_{A}(x)$ through the center-stable leaves $\tilde{\mathcal{W}}_{A}^{cs}$.
Analogously we define the projection $\Pi^{ss}_{x}$.
In the same way using the GPS we can define inside a $\tilde{\mathcal{W}}^{cs}_{A}$-leaf the projection on the strong stable leaves $\tilde{\mathcal{W}}^{ss}_{A}$ through the center leaves $\tilde{\mathcal{W}}^{c}_{A}$.

We note that by the previous definition we get  
$$A \circ \Pi^{uu}_{x}(y)=A(\tilde{\mathcal{W}}^{cs}_{A}(y)\cap \tilde{\mathcal{W}}^{uu}_{A}(x))=\tilde{\mathcal{W}}^{cs}_{A}(A(y))\cap \tilde{\mathcal{W}}^{uu}_{A}(A(x))=\Pi^{uu}_{A(x)}(A(y))$$
Hence $A \circ \Pi^{uu}_{x}=\Pi^{uu}_{A(x)}\circ A$. The same happens with the case $\Pi^{ss}_{x}$. For the sake of simplicity from now on we're going to omit the sub indexes in the projections. 

\subsection{$\sigma$-propperness}

Given $f \in \textnormal{PH}_{A}(G/\Gamma)$, we know from \cite{Fr} that there is a continuous and surjective map $H_{f}:G \to G$ such that $A \circ H_{f}=H_{f}\circ \tilde{f}$, where $\tilde{f}:G \to G$ is a lift of $f$. The stable manifold theorem gives us the existence of two invariant foliations $\tilde{\mathcal{W}}^{ss}_{f}$ and $\tilde{\mathcal{W}}^{uu}_{f}$ tangent to $E^{uu}_{f}$ and $E^{ss}_{f}$ respectively. It's easy to see that $H_{f}(\tilde{\mathcal{W}}^{ss}_{f}(x))\subset \tilde{\mathcal{W}}^{s}_{A}(H_{f}(x))$, then we can project the image of $H_{f}|_{\tilde{\mathcal{W}}^{ss}_{f}(x)}$ to $\tilde{\mathcal{W}}^{ss}_{A}(H_{f}(x))$. We call $H^{\sigma}_{f}:=\Pi^{\sigma} \circ H_{f}$. We are going to note  
$$D^{\sigma}_{f}(x,R)=\{ y\in \tilde{\mathcal{W}}^{\sigma}_{f}(x): d_{\tilde{\mathcal{W}}^{\sigma}_{f}}(x,y)<R  \}
$$ where $ d_{\tilde{\mathcal{W}}^{\sigma}_{f}}(x,y) $ denotes the distance inside the $\tilde{\mathcal{W}}^{\sigma}_{f}$-leaf.

\begin{df}[$\sigma$-proper] For $\sigma=ss,uu$ we say that $f\in \textnormal{PH}_{A}(M)$ is $\sigma$-proper if for every $x\in G$ the map $H^{\sigma}_{f}|_{\tilde{\mathcal{W}}^{\sigma}_{f}(x)}:\tilde{\mathcal{W}}^{\sigma}_{f}(x)\to \tilde{\mathcal{W}}^{\sigma}_{A}(H_{f}(x))$ is uniformly proper. More precisely, for every $R>0$ there exists $\delta>0$ such that $(H^{\sigma}_{f})^{-1}(D^{\sigma}_{A}(H_{f}(x),R))\cap \tilde{\mathcal{W}}_{f}^{\sigma}(x) \subset D^{\sigma}_{f}(x,\delta)$ for every $x\in G$.
\end{df}

\begin{obs} \label{osp1}
In the previous definition, we can take $R=1$ because of uniform hyperbolicity and the compactness of $M$. Then a diffeomorphism $f\in \textnormal{PH}_{A}(M)$ is $\sigma$-proper if and only if exist $\delta>0$ such that$(H^{\sigma}_{f})^{-1}(D^{\sigma}_{A}(H_{f}(x),1))\cap \tilde{\mathcal{W}}^{\sigma}_{f}(x)\subset D^{\sigma}_{f}(x,\delta)$ for every $x\in G$.
\end{obs}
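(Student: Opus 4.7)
One implication is immediate: if $f$ satisfies the general definition then specializing to $R=1$ yields the stated condition. The content is in the converse. My plan is to use the equivariance of $H_f^\sigma$ together with the hyperbolicity of $A$ in the $\sigma$-direction to ``shrink'' an arbitrary $R$-ball into a $1$-ball, apply the hypothesis there, and then push the resulting estimate back by a controlled number of iterates of $\tilde f$.

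First I would record the equivariance. The semiconjugacy gives $A\circ H_f=H_f\circ \tilde f$, and the previous subsection established $A\circ \Pi^\sigma=\Pi^\sigma\circ A$. Composing, one gets
\begin{equation*}
A\circ H_f^\sigma \;=\; A\circ \Pi^\sigma\circ H_f \;=\; \Pi^\sigma\circ A\circ H_f \;=\; \Pi^\sigma\circ H_f\circ \tilde f \;=\; H_f^\sigma\circ \tilde f.
\end{equation*}
Moreover $\tilde f$ preserves the foliation $\tilde{\mathcal W}^\sigma_f$, and the family of discs $D^\sigma_A(\cdot,r)$ transforms correctly under $A$ by uniform hyperbolicity of $A$ on $\tilde{\mathcal W}^\sigma_A$-leaves.

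Next, fix $R>0$ and assume for concreteness $\sigma=uu$ (the case $\sigma=ss$ is identical, iterating forward instead of backward). Choose $\delta>0$ from the $R=1$ hypothesis. By uniform contraction of $A^{-1}$ on the unstable leaves, there exists $n=n(R)\in\mathbb{N}$ such that
\begin{equation*}
A^{-n}\bigl(D^{uu}_A(H_f(x),R)\bigr)\;\subset\; D^{uu}_A\bigl(A^{-n}(H_f(x)),1\bigr) \;=\; D^{uu}_A\bigl(H_f(\tilde f^{-n}(x)),1\bigr)
\end{equation*}
for every $x\in G$. Now if $y\in (H_f^{uu})^{-1}(D^{uu}_A(H_f(x),R))\cap \tilde{\mathcal W}^{uu}_f(x)$, the equivariance above yields $H_f^{uu}(\tilde f^{-n}(y))=A^{-n}(H_f^{uu}(y))\in D^{uu}_A(H_f(\tilde f^{-n}(x)),1)$, while $\tilde f^{-n}(y)\in \tilde{\mathcal W}^{uu}_f(\tilde f^{-n}(x))$ since the foliation is $\tilde f$-invariant. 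The $R=1$ hypothesis applied at $\tilde f^{-n}(x)$ thus gives
\begin{equation*}
d_{\tilde{\mathcal W}^{uu}_f}\bigl(\tilde f^{-n}(x),\tilde f^{-n}(y)\bigr)\;\leq\; \delta.
\end{equation*}

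Finally, I would pull this estimate back by applying $\tilde f^{n}$. Since $f$ is a $C^1$ diffeomorphism of the compact manifold $M$, $\|Df\|_\infty$ is finite, and therefore $\tilde f^{n}$ expands distances along $\tilde{\mathcal W}^{uu}_f$-leaves by at most a factor $\|Df\|_\infty^n$. Hence $d_{\tilde{\mathcal W}^{uu}_f}(x,y)\leq \|Df\|_\infty^{n}\,\delta =:\delta'$, and $\delta'$ depends only on $R$. This provides the uniform properness at radius $R$, completing the proof.

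The only mild subtlety is keeping track of which iteration direction shrinks the relevant discs (backwards for $uu$, forwards for $ss$); once the equivariance $A\circ H_f^\sigma=H_f^\sigma\circ \tilde f$ is in hand everything else is standard uniform hyperbolicity plus a $C^1$ bound.
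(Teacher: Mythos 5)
Your argument is correct and uses exactly the ingredients the remark advertises (uniform hyperbolicity of $A$ and compactness of $M$). The paper states this observation without proof, so there is no "paper proof" to compare against; but the equivariance $A\circ H_f^\sigma = H_f^\sigma\circ\tilde f$ (coming from the semiconjugacy together with $A\circ\Pi^\sigma = \Pi^\sigma\circ A$), the uniform contraction of $A^{-n}$ on $\tilde{\mathcal W}^{uu}_A$-leaves to reduce radius $R$ to $1$, and the $\|Df\|_\infty^n$ bound on leafwise expansion when pushing back by $\tilde f^n$ together give precisely the bootstrap from $R=1$ to arbitrary $R$. One small stylistic remark: the paper's later Remark \ref{rkhyp} introduces the constant $\Delta_f$ which plays exactly the role of your $\|Df\|_\infty$ when controlling how $\tilde f^n$ blows up $D^{uu}_f$-discs; phrasing your last step in terms of $\Delta_f$ would tie it to the paper's notation, but the two bounds serve the same purpose and your version is self-contained.
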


The definition of $\sigma$-properness can be expressed in a different and more geometric way. The next lemma gives the desire equivalence. First we introduce a few definitions.

\begin{df}
Let $f\in \textnormal{PH}_{A}(M)$, we say that $f$ has the condition 
\begin{itemize}
\item[$(I^{\sigma})$] If the function $H^{\sigma}_{f}$ is injective when restricted to $\tilde{\mathcal{W}}^{\sigma}_{f}$ leaves.
\item[$(S^{\sigma})$] If the function $H^{\sigma}_{f}$ is surjective when restricted to the leaves of $\tilde{\mathcal{W}}^{\sigma}_{f}$.
\end{itemize} 
\end{df}
Notice that if we put together this two conditions we get that the map $H^{\sigma}_{f}|_{\tilde{\mathcal{W}}^{\sigma}_{f}(x)}:\tilde{\mathcal{W}}^{\sigma}_{f}(x)\to \tilde{\mathcal{W}}^{\sigma}_{A}(H_{f}(x))$ is a homeomorphism, a much friendly concept.   
\begin{lem} \label{equipro}
If $f\in \textnormal{PH}_{A}(M)$ then, $f$ is $\sigma$-proper if and only if $f$ satisfies properties $(I^{\sigma})$ and $(S^{\sigma})$. 
\end{lem}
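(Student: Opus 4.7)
The plan is to prove the two implications separately. The forward direction ($\sigma$-proper $\Rightarrow (I^\sigma),(S^\sigma)$) is largely topological, whereas the backward direction requires a compactness argument on $M$.

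For $(I^\sigma)$ under $\sigma$-properness, suppose $H^\sigma_f(x)=H^\sigma_f(y)$ for distinct $x,y$ in a common $\tilde{\mathcal{W}}^\sigma_f$-leaf. The equivariance $A\circ\Pi^\sigma=\Pi^\sigma\circ A$ noted in the previous subsection yields $H^\sigma_f\circ\tilde{f}^n=A^n\circ H^\sigma_f$, so $H^\sigma_f(\tilde{f}^n x)=H^\sigma_f(\tilde{f}^n y)$ for every $n$. By Remark \ref{osp1} with $R=1$, $\sigma$-properness then forces $d_{\tilde{\mathcal{W}}^\sigma_f}(\tilde{f}^n x,\tilde{f}^n y)<\delta$ for all $n$, which contradicts the exponential expansion along strong unstable leaves under forward iteration (take $n\to+\infty$ for $\sigma=uu$, $n\to-\infty$ for $\sigma=ss$). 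For $(S^\sigma)$ I would then invoke that both $\tilde{\mathcal{W}}^\sigma_f(x)$ and $\tilde{\mathcal{W}}^\sigma_A(H_f(x))$ are diffeomorphic to $\mathbb{R}^k$ of the same dimension: the former by the stable manifold theorem, the latter because $G^\sigma=\exp(\mathfrak{g}^\sigma)$ is a simply connected nilpotent Lie subgroup, and the dimensions match by the definition of $\mathrm{PH}_A(M)$. The restriction of $H^\sigma_f$ to $\tilde{\mathcal{W}}^\sigma_f(x)$ is continuous, proper (by hypothesis), and injective (by the just proved $(I^\sigma)$); properness makes its image closed, Brouwer's invariance of domain makes it open, and connectedness of the target closes the argument.

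For the converse, $(I^\sigma)$ and $(S^\sigma)$ make each $H^\sigma_f|_{\tilde{\mathcal{W}}^\sigma_f(x)}$ a continuous bijection between two copies of $\mathbb{R}^k$, which invariance of domain upgrades to a homeomorphism; in particular the preimage of $\bar D^\sigma_A(H_f(x),1)$ is compact, hence contained in $\bar D^\sigma_f(x,\delta_x)$ for some $\delta_x<\infty$. To make $\delta_x$ uniform in $x$, I would argue by contradiction: pick $x_n,y_n$ with $y_n\in\tilde{\mathcal{W}}^\sigma_f(x_n)$, $H^\sigma_f(y_n)\in\bar D^\sigma_A(H_f(x_n),1)$ and $d_{\tilde{\mathcal{W}}^\sigma_f}(x_n,y_n)\to\infty$. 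Since $M$ is compact, composing with deck transformations $\phi_n\in\Gamma$ (isometries of $G$ that preserve the foliations and commute with $H_f$) we may assume $x_n\to x^*\in G$ and, along a subsequence, $H^\sigma_f(y_n)\to w^*\in\bar D^\sigma_A(H_f(x^*),1)$. By $(S^\sigma)$ at $x^*$ there is a (by $(I^\sigma)$, unique) $y^*\in\tilde{\mathcal{W}}^\sigma_f(x^*)$ with $H^\sigma_f(y^*)=w^*$; and a continuity argument---that the leaves $\tilde{\mathcal{W}}^\sigma_f(x_n)$ and the leaf-wise homeomorphisms converge uniformly on compacta to their counterparts at $x^*$---forces $y_n\to y^*$, contradicting $d_{\tilde{\mathcal{W}}^\sigma_f}(x_n,y_n)\to\infty$.

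The main obstacle is exactly this final continuity step: one must make precise the claim that the leaf-wise homeomorphism $H^\sigma_f|_{\tilde{\mathcal{W}}^\sigma_f(x)}$ depends continuously enough on $x$ to pass sequences to the limit. In the torus setting of \cite{FPS} this reduces essentially to statements about linear projections; here it relies on the algebraic fact that the $A$-foliations $\tilde{\mathcal{W}}^\sigma_A$ are literal left-translates of the closed nilpotent subgroups $G^\sigma$, together with left-invariance of the Riemannian metric on $G$. Once that continuous dependence is in hand, the remainder is routine bookkeeping.
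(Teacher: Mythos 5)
Your forward implication ($\sigma$-proper $\Rightarrow (I^{\sigma})$ and $(S^{\sigma})$) is sound: equivariance of $H^{\sigma}_{f}$ under the dynamics together with exponential expansion along $\tilde{\mathcal{W}}^{\sigma}_{f}$-leaves gives injectivity, and then properness, invariance of domain and connectedness of the target give surjectivity. The converse, however, contains a genuine gap, and it is not the ``routine bookkeeping'' you suggest. You take $x_{n},y_{n}$ with $y_{n}\in\tilde{\mathcal{W}}^{\sigma}_{f}(x_{n})$, $H^{\sigma}_{f}(y_{n})\in \bar{D}^{\sigma}_{A}(H_{f}(x_{n}),1)$ and $d_{\tilde{\mathcal{W}}^{\sigma}_{f}}(x_{n},y_{n})\to\infty$, bring $x_{n}$ into a fundamental domain, and then assert that convergence of the leaf-wise maps ``uniformly on compacta'' forces $y_{n}\to y^{*}$. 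But uniform convergence of $H^{\sigma}_{f}|_{\tilde{\mathcal{W}}^{\sigma}_{f}(x_{n})}$ on compacta says nothing about where preimages of a fixed bounded set sit: images staying bounded while preimages escape to infinity along the leaves is precisely the failure of uniform properness you are trying to rule out, so the step begs the question. In particular the $y_{n}$ need not admit any convergent subsequence, and the argument does not close.

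What is missing is a renormalization that brings the hyperbolicity of $A$ back into play (you use it in the forward direction but drop it here). For $\sigma=uu$, choose $k_{n}\to\infty$ so that $d_{\tilde{\mathcal{W}}^{uu}_{f}}(\tilde{f}^{-k_{n}}x_{n},\tilde{f}^{-k_{n}}y_{n})$ lies in a fixed window, say $[\Delta_{f}^{-1},1]$, using the bounded distortion of $\tilde{f}^{-1}$ on strong leaves from Remark \ref{rkhyp}; the equivariance $A^{-1}\circ H^{uu}_{f}=H^{uu}_{f}\circ\tilde{f}^{-1}$ together with the uniform contraction of $A^{-1}$ on $\tilde{\mathcal{W}}^{uu}_{A}$ then forces $d_{\tilde{\mathcal{W}}^{uu}_{A}}(H^{uu}_{f}(\tilde{f}^{-k_{n}}x_{n}),H^{uu}_{f}(\tilde{f}^{-k_{n}}y_{n}))\to 0$. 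Now, after composing with deck transformations, both renormalized sequences stay in a compact region, a subsequence converges to points $x'\neq y'$ on a common $uu$-leaf, and continuity of $H^{uu}_{f}$ yields $H^{uu}_{f}(x')=H^{uu}_{f}(y')$, contradicting $(I^{uu})$. For comparison, the paper itself does not prove this lemma but delegates it to \cite{FPS}; Remark \ref{iis} records that in \cite{FPS} one also proves $(I^{\sigma})\Rightarrow(S^{\sigma})$, so in practice only $(I^{\sigma})$ needs to be verified.
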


\begin{obs}\label{iis}
The proof of the previous lemma can be found in \cite{FPS}. Actually in that paper, the authors divided the lemma in three lemmas and proved that $(I^{\sigma})$ implies $(S^{\sigma})$. Then it's enough to check $(I^{\sigma})$ for being $\sigma$-proper.
\end{obs}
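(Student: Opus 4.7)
The plan is to sketch the non-trivial implication $(I^{\sigma})\Rightarrow (S^{\sigma})$; combined with Lemma \ref{equipro} this yields the observation. I will treat $\sigma=ss$, the case $\sigma=uu$ being symmetric (backward iterates of $\tilde f$ and $A$ in place of forward ones). Throughout I use the commutation $A\circ H^{ss}_f=H^{ss}_f\circ\tilde f$ established at the start of this section.

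Fix $x\in G$ and consider $h_x:=H^{ss}_f|_{\tilde{\mathcal{W}}^{ss}_f(x)}$, a continuous map into $\tilde{\mathcal{W}}^{ss}_A(H_f(x))$. Source and target are topological manifolds of the same dimension, since $\dim E^{ss}_f=\dim E^{ss}_A$. By $(I^{ss})$ the map $h_x$ is injective, and invariance of domain makes it an open embedding. Assume for a moment that we have a radius $r_0>0$, independent of $x$, with $D^{ss}_A(H_f(x),r_0)\subset h_x(\tilde{\mathcal{W}}^{ss}_f(x))$ for every $x\in G$. Given $z\in \tilde{\mathcal{W}}^{ss}_A(H_f(x))$, by uniform contraction of $A$ along $\tilde{\mathcal{W}}^{ss}_A$ one can choose $n\ge 0$ so that
$$d_{\tilde{\mathcal{W}}^{ss}_A}(A^n(z),H_f(\tilde f^n(x)))<r_0.$$
Then $A^n(z)$ belongs to the image of $h_{\tilde f^n(x)}$, so $A^n(z)=H^{ss}_f(w)$ for some $w\in\tilde{\mathcal{W}}^{ss}_f(\tilde f^n(x))$. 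By equivariance, $z=H^{ss}_f(\tilde f^{-n}(w))$ and $\tilde f^{-n}(w)\in\tilde{\mathcal{W}}^{ss}_f(x)$, proving that $z$ lies in the image of $h_x$.

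The main obstacle is therefore securing the uniform radius $r_0$. Let $r(x)$ be the supremum of $r>0$ with $D^{ss}_A(H_f(x),r)\subset h_x(\tilde{\mathcal{W}}^{ss}_f(x))$; by invariance of domain, $r(x)>0$ pointwise. The key point is that $\tilde f$, $H_f$ and the foliations $\tilde{\mathcal{W}}^{ss}_f$, $\tilde{\mathcal{W}}^{ss}_A$ are all equivariant with respect to the cocompact deck-group action on $G$, so that $r$ descends to a well-defined function on the compact manifold $M=G/\Gamma$. Combining this with the lower semicontinuity of $r$ (which follows from the continuous dependence of the leaves and of $H^{ss}_f$ on the base point, so that a disk of radius $r_1<r(x_0)$ contained in the image at $x_0$ persists for nearby $x$) one obtains $r_0:=\inf_{x\in G}r(x)>0$, closing the argument. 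Full details for each of these three steps (invariance of domain, uniform openness, dynamical iteration) appear as the three separate lemmas of \cite{FPS}.
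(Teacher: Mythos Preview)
Your sketch is correct and matches the strategy alluded to in the remark: the paper itself gives no proof here, only a pointer to \cite{FPS}, and the three-step outline you provide (invariance of domain to get local openness of $h_x$, uniformity of the inner radius via $\Gamma$-periodicity and lower semicontinuity on the compact quotient, then surjectivity by iterating under the contraction of $A$ along $\tilde{\mathcal{W}}^{ss}_A$) is precisely the decomposition into three lemmas that \cite{FPS} carries out. One small point worth making explicit in the semicontinuity step: to pass from ``$D^{ss}_A(H_f(x_0),r_1)$ is contained in the image of $h_{x_0}$'' to the same statement for nearby $x$, you implicitly use that the preimage of this closed disk under the homeomorphism $h_{x_0}$ is compact (hence sits in some $D^{ss}_f(x_0,R)$), together with a degree/stability argument for the boundary; this is routine but is where the ``full details'' you defer to \cite{FPS} actually live.
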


\subsection{Integrability criterion}

Given a subset $K \subset G$ and $R>0$ we call $B_{R}(K)$ the $R$-neighbourhood of $K$, that is, the set of points in $G$ that are less than $R$ from some point in $K$.

\begin{df}[Almost parallel foliations] Given $\mathcal{F}_{1}$, $\mathcal{F}_{2}$ foliations in $G$. We say that the two foliations are \textit{almost parallel} if exists $R>0$ such that for every $x\in G$, there are points $y_{1}, y_{2} \in G:$ such that
\begin{itemize}
\item $\tilde{\mathcal{F}}_{1}(x)\subset B_{R}(\tilde{\mathcal{F}}_{2}(y_{1}))$ and $\tilde{\mathcal{F}}_{2}(y_{1})\subset B_{R}(\tilde{\mathcal{F}}_{1}(x))$
\item  $\tilde{\mathcal{F}}_{2}(x)\subset B_{R}(\tilde{\mathcal{F}}_{1}(y_{2}))$ and $\tilde{\mathcal{F}}_{1}(y_{2})\subset B_{R}(\tilde{\mathcal{F}}_{2}(x))$  
\end{itemize}
\end{df}
It's easy to see that being almost parallel is an equivalence relationship. Moreover, the condition can be expressed in terms of the Hausdorff distance:
$\forall x\in G$, $\exists y_{1}, y_{2}\in G$ such that $D_{H}(\tilde{\mathcal{F}}_{1}(x),\tilde{\mathcal{F}}_{2}(y_{1}))<R$ and $D_{H}(\tilde{\mathcal{F}}_{2}(x),\tilde{\mathcal{F}}_{1}(y_{2}))<R$.

\begin{df}[SADC] We say that a diffeomorphism $f\in \textnormal{PH}_{A}(M)$ is \textbf{strongly almost dynamically coherent} \textnormal{(SADC)} if there exists foliations $\mathcal{F}^{cs}$, $\mathcal{F}^{cu}$ (not necessary invariant) which are respectively transverse to $E^{uu}_{f}$, $E^{ss}_{f}$ and almost parallel to the foliations $\mathcal{W}^{cs}_{A}$, $\mathcal{W}^{cu}_{A}$ respectively.
\end{df}

The previous name (SADC) comes from \cite{Po} where Potrie defines the concept of \textit{almost dynamically coherent} as a partially hyperbolic diffeomorphism with foliations $\mathcal{F}^{cs}$, $\mathcal{F}^{cu}$ transverse to $E^{uu}_{f}$, $E^{ss}_{f}$. In fact in that paper the author proved for dimension 3 that this foliations are almost parallel to $\mathcal{W}^{cs}_{A}$, $\mathcal{W}^{cu}_{A}$. In higher dimension this is not clear, that's why in \cite{FPS} the added the \textit{strong} hypothesis.

\begin{teo}[Integrability criterion \cite{FPS}] \label{tcdi}
\
Assume that $f\in \textnormal{PH}_{A}(M)$ verifies the following conditions:
\begin{itemize}
\item $f$ is $uu$-proper. 
\item $f$ is SADC.
\end{itemize}
Then, the bundle $E^{ss}_{f}\oplus E^{c}_{f}$ is integrable into an $f$-invariant foliation $\mathcal{W}^{cs}_{f}$ that verifies $H_{f}^{-1}(\tilde{\mathcal{W}}^{cs}_{A}(H_{f}(x)))=\tilde{W}^{cs}_{f}(x)$. 
Moreover, $\tilde{\mathcal{W}}^{cs}_{f}$ and $\tilde{\mathcal{W}}^{uu}_{f}$ have global product structure. 
\end{teo}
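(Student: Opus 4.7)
The plan is to define the candidate center-stable leaves in the universal cover by
\[
\tilde{\mathcal{W}}^{cs}_f(x)\,:=\,H_f^{-1}\bigl(\tilde{\mathcal{W}}^{cs}_A(H_f(x))\bigr),\qquad x\in G,
\]
and then verify in order that \textbf{(i)} these sets form an $\tilde f$-invariant partition of $G$, \textbf{(ii)} they have global product structure with $\tilde{\mathcal{W}}^{uu}_f$, and \textbf{(iii)} they are the leaves of a genuine topological foliation tangent to $E^{ss}_f\oplus E^c_f$. Point (i) is immediate from the semiconjugacy $A\circ H_f=H_f\circ\tilde f$ together with the fact that the algebraic foliation $\tilde{\mathcal{W}}^{cs}_A$ is itself an $A$-invariant partition.

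I would attack (ii) next, since it uses only $uu$-properness. Given $x,y\in G$, let $p$ be the unique point of $\tilde{\mathcal{W}}^{cs}_A(H_f(x))\cap\tilde{\mathcal{W}}^{uu}_A(H_f(y))$ furnished by Proposition \ref{ppg}. Invoking $uu$-properness, which by Lemma \ref{equipro} and Remark \ref{iis} makes $H^{uu}_f$ restrict to a homeomorphism $\tilde{\mathcal{W}}^{uu}_f(y)\to\tilde{\mathcal{W}}^{uu}_A(H_f(y))$, choose the unique $q\in\tilde{\mathcal{W}}^{uu}_f(y)$ with $H^{uu}_f(q)=p$. Unfolding the definition $H^{uu}_f=\Pi^{uu}\circ H_f$ forces $H_f(q)\in\tilde{\mathcal{W}}^{cs}_A(p)=\tilde{\mathcal{W}}^{cs}_A(H_f(x))$, hence $q\in\tilde{\mathcal{W}}^{cs}_f(x)\cap\tilde{\mathcal{W}}^{uu}_f(y)$. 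Any second point $q'$ in the same intersection would yield $H^{uu}_f(q')=p$ by the same computation, so $q'=q$ by injectivity on the strong unstable leaf.

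The core of the argument is (iii). Here I would compare with the auxiliary foliation $\mathcal{F}^{cs}$ supplied by SADC and show $\tilde{\mathcal{F}}^{cs}(x)=\tilde{\mathcal{W}}^{cs}_f(x)$ as sets. The almost-parallel hypothesis guarantees that $\tilde{\mathcal{F}}^{cs}(x)$ stays within uniform Hausdorff distance of some algebraic leaf, and since $H_f$ is at bounded distance from the identity, so does $H_f(\tilde{\mathcal{F}}^{cs}(x))$. Using $A^n\circ H_f=H_f\circ\tilde f^n$, any $\mathfrak{g}^{uu}$-drift between $H_f(\tilde{\mathcal{F}}^{cs}(x))$ and $\tilde{\mathcal{W}}^{cs}_A(H_f(x))$ would be dilated exponentially in forward time while the bounded-distance control is preserved up to additive error, a contradiction; thus the drift vanishes and $\tilde{\mathcal{F}}^{cs}(x)\subset\tilde{\mathcal{W}}^{cs}_f(x)$. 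The reverse inclusion uses (ii): any putative point of $\tilde{\mathcal{W}}^{cs}_f(x)\setminus\tilde{\mathcal{F}}^{cs}(x)$ could, by transversality of $\mathcal{F}^{cs}$ to $E^{uu}_f$, be joined to $\tilde{\mathcal{F}}^{cs}(x)$ by a nontrivial strong-unstable arc, contradicting uniqueness of the intersection $\tilde{\mathcal{W}}^{cs}_f(x)\cap\tilde{\mathcal{W}}^{uu}_f(\cdot)$. This identifies $\tilde{\mathcal{W}}^{cs}_f$ as a topological foliation, and $\tilde f$-invariance is already given by (i).

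For the tangency clause, the key observation is that the strong stable foliation is subordinate to the candidate: $H_f(\tilde{\mathcal{W}}^{ss}_f(x))\subset\tilde{\mathcal{W}}^{s}_A(H_f(x))\subset\tilde{\mathcal{W}}^{cs}_A(H_f(x))$ gives $\tilde{\mathcal{W}}^{ss}_f(x)\subset\tilde{\mathcal{W}}^{cs}_f(x)$. Combined with $\tilde f$-invariance, the correct dimension, and transversality to $E^{uu}_f$, the usual dominated-splitting/cone argument of \cite{HPS} forces the tangent space to a leaf to equal $E^{ss}_f\oplus E^c_f$. I expect the main obstacle to be the shadowing-to-containment step inside (iii): in the torus case of \cite{FPS} the iteration takes place in Euclidean linear coordinates, whereas in the nilmanifold setting one must pass through exponential coordinates and exploit left-translation invariance of $\tilde{\mathcal{W}}^{cs}_A$, the Lie-subalgebra property from Remark \ref{obsliesub}, and the uniform exponential expansion of $dA$ on $\mathfrak{g}^{uu}$ in order to keep the drift argument geometrically well-defined.
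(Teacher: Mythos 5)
Your parts (i) and (ii) are essentially correct: defining the candidate leaves as $H_f$-fibers, $\tilde f$-invariance from the semiconjugacy, and global product structure with $\tilde{\mathcal{W}}^{uu}_f$ via $uu$-properness (injectivity plus surjectivity of $H^{uu}_f$ on strong-unstable leaves) all go through, and this much agrees with the spirit of the paper's argument. The genuine gap is in part (iii), and it is fatal to the plan as written.

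You assert that $\tilde{\mathcal{F}}^{cs}(x)=\tilde{\mathcal{W}}^{cs}_f(x)$ as sets, where $\mathcal{F}^{cs}$ is the SADC foliation. This is false in general: $\mathcal{F}^{cs}$ is an \emph{arbitrary} auxiliary foliation, transverse to $E^{uu}_f$ and almost parallel to $\mathcal{W}^{cs}_A$, but it need not be $f$-invariant and need not be a union of $H_f$-fibers. Take $f=A$ for a counterexample: then $\tilde{\mathcal{W}}^{cs}_f=\tilde{\mathcal{W}}^{cs}_A$, yet any small perturbation of $\mathcal{W}^{cs}_A$ still satisfies SADC while differing from it leaf by leaf. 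Consequently your drift argument breaks. You want to iterate the semiconjugacy forward, $A^n\circ H_f=H_f\circ\tilde f^n$, and say that any $\mathfrak{g}^{uu}$-drift between $H_f(\tilde{\mathcal{F}}^{cs}(x))$ and $\tilde{\mathcal{W}}^{cs}_A(H_f(x))$ is expanded while the almost-parallel control is preserved up to additive error. But the almost-parallel bound applies to \emph{leaves of $\mathcal{F}^{cs}$}, and $\tilde f^n(\tilde{\mathcal{F}}^{cs}(x))$ is not such a leaf, so after iterating there is no bound to contradict the expansion. Your proposed reverse inclusion is also circular, since it tacitly assumes the forward one. Beyond this, without the identification of the $H_f$-fiber with an honest submanifold, nothing in your proposal shows the $H_f$-fibers are topological leaves of the right dimension or that they are tangent to $E^{ss}_f\oplus E^c_f$; GPS of a partition with $\tilde{\mathcal{W}}^{uu}_f$ does not by itself give this.

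The paper avoids this by iterating $\mathcal{F}^{cs}$ \emph{backwards}: it considers the local plaques $W^x_n(y)$ of $\tilde f^{-n}(\tilde{\mathcal{F}}^{cs})$ inside uniform local product charts, written as Lipschitz graphs with bounded constants, extracts convergent subsequences (a graph-transform/Hadamard precompactness argument), and only the \emph{limits} $W^x_{\infty,\alpha}$ are shown to lie in $H_f$-fibers. The drift argument is then applied to $\tilde f^{n_j}(W^x_{n_j}(z))$, which by construction \emph{is} a plaque of $\tilde{\mathcal{F}}^{cs}(\tilde f^{n_j}(z))$, so the almost-parallel bound does hold; the exponential expansion of $dA$ on $\mathfrak{g}^{uu}$ then forces the limit into $\tilde{\mathcal{W}}^{cs}_A(H_f(z))$. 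Uniqueness of limits and non-merging of distinct limit plaques both follow from injectivity of $H^{uu}_f$ on $\tilde{\mathcal{W}}^{uu}_f$-leaves. To repair your proposal you would need to replace the identification $\tilde{\mathcal{F}}^{cs}=\tilde{\mathcal{W}}^{cs}_f$ by this backward-iteration-and-limit construction, and derive tangency to $E^{ss}_f\oplus E^c_f$ from $f$-invariance of the limit plus cone-field estimates rather than from the SADC foliation directly.
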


\begin{proof}
The idea of the proof is pretty clear: take the foliation $\tilde{\mathcal{F}}^{cs}$ given by the SADC property and iterate it backwards by $\tilde{f}$ hoping that in the limit it will converges to the desired foliation. 

We know that $\{H_{f}^{-1}(\tilde{\mathcal{W}}^{cs}_{A}(H_{f}(y))): y\in G\}$ forms an $\tilde{f}$-invariant partition of $G$ that is invariant by deck translations. This is easy to check and is due to the semiconjugacy relation and the fact that $H_{f}$ is $\Gamma$-periodic. Now take the foliation $\tilde{\mathcal{F}}^{cs}$ given by the SADC property. Since it is almost parallel to $\tilde{\mathcal{W}}^{cs}_{A}$ and $H_{f}$ is at bounded Hausdorff distance from the identity we have that $H_{f}(\tilde{\mathcal{F}}^{cs}(x))$ is also at bounded Hausdorff distance from some leaf of $\tilde{\mathcal{W}}^{cs}_{A}$ for every $x\in G$.
\begin{afi} The foliations $\mathcal{F}^{cs}$ and $\mathcal{W}^{uu}_{f}$ have GPS.
\begin{proof}
This is because the properties $(I^{uu})$ and $(S^{uu})$. Given points $x,y \in G$, we consider the subset $Q=G\setminus \tilde{\mathcal{F}}^{cs}(x)$. A Jordan separation like result (see \cite{ABP} Lemma 2.1) says that the $d-cs-1$ homology of $Q$ is non trivial. Here $d=dim(G)$ and $cs=dim(G^{cs})$. 
Since $\tilde{\mathcal{F}}^{cs}(x)$ is at bounded Hausdorff distance of $\tilde{\mathcal{W}}^{cs}_{A}$ we deduce the existence of a non trivial cycle of the $d-cs-1$ homology group $H_{d-cs-1}(Q)$ inside of $\tilde{\mathcal{W}}^{uu}_{A}$. Choosing this cycle sufficently far from $\tilde{\mathcal{F}}^{cs}(x)$, and using properties $(I^{uu})$ and $(S^{uu})$ we deduce the existence of a non trivial cycle contained in $\tilde{\mathcal{W}}^{uu}_{f}(y)$. This gives the desired intersection (see \cite{ABP} for more details).

Due to $(I^{uu})$ property we have that $H_{f}$ is injective when restricted to leaves of $\tilde{\mathcal{W}}^{uu}_{f}$ and also, that for every $y\in G$ we have that $H_{f}(\tilde{\mathcal{W}}^{uu}_{f}(y))$ intersects $\tilde{\mathcal{W}}^{cs}_{A}(e)=G^{cs}$ in only one point. Then, we can define the function $\varphi :\tilde{\mathcal{F}}^{cs}(x)\to \tilde{\mathcal{W}}^{cs}_{A}(e)=G^{cs}$ given by $\varphi (p)=H_{f}(\tilde{\mathcal{W}}^{uu}_{f}(p))\cap G^{cs}$. 

The surjectivity of the function $\varphi$ comes from the intersection proved above. Moreover $\varphi$ is continuous because $H_{f}$ is continuous and the continuous variation of $\tilde{\mathcal{W}}^{uu}_{f}$-leaves. We observed that points in the same $\tilde{\mathcal{F}}^{cs}(x)$-leaf will have the same image by $\varphi$. The continuity of $\tilde{\mathcal{W}}^{uu}_{f}$ and the LPS tell us that $\varphi$ is a covering map. Since $\tilde{\mathcal{F}}^{cs}(x)$ is contractible, the map $\varphi$ is a homeomorphism and we get uniqueness. 
\end{proof}
\end{afi}
The previous claim says that we can see the leaves of $\tilde{\mathcal{F}}^{cs}$ (and then of $\tilde{f}^{-n}(\tilde{\mathcal{F}}^{cs})$) as graphs of functions from $\mathbb{R}^{cs}$ to $\mathbb{R}^{uu}$.  
Since the foliation $\mathcal{F}^{cs}$ is uniformly transversal to $E_{f}^{uu}$ we know there are  LPS boxes of uniform size in $G$: 
There is $\epsilon>0$ s.t. $\forall x \in G$ there is a neighbourhood $V_{x}\supseteq B(x,\epsilon)$ and $C^{1}$-local coordinates $\psi_{x}:\mathbb{D}^{cs}\times \mathbb{D}^{uu} \to V_{x}$ such that:
\begin{itemize}
\item $\psi_{x}(\mathbb{D}^{cs}\times \mathbb{D}^{uu})=V_{x}$
\item For every $y \in B_{\epsilon}(x)\subseteq V_{x}$ we have that if we call $W^{x}_{n}(y)$ to the connected component of $V_{x}\cap \tilde{f}^{-n}(\tilde{\mathcal{F}}^{cs}(\tilde{f}^{n}(y)))$ that contains $y$ then
$$\psi_{x}^{-1}(W^{x}_{n}(y))=\textnormal{graph}(h^{x,y}_{n})$$ 
where $h^{x,y}_{n}:\mathbb{D}^{cs}\to \mathbb{D}^{uu}$ is a $C^{1}$ function with bounded first derivatives.  
\end{itemize}
This way we get that the set $\{h^{x,y}_{n}\}_{n\in \mathbb{N}}$ is precompact in the space of Lipschitz functions $\mathbb{D}^{cs}\to \mathbb{D}^{uu}$ (\cite{HPS}). Therefore the leaves of $\tilde{f}^{-n}(\tilde{\mathcal{F}}^{cs})$ have convergent sub-sequences. From this point we have to deal with two problems: the first one is that a priori there could be a leaf with more than one limit, and second, that in the limit, different leaves might merge. We will handle the two problems in the same way.

For every $y \in B_{\epsilon}(x)$, we call $\mathcal{J}^{x}_{y}$ to the set of indices such that for every $\alpha \in \mathcal{J}^{x}_{y}$ there is a Lipschitz function $h^{x,y}_{\infty,\alpha}:\mathbb{D}^{cs}\to \mathbb{D}^{uu}$ and a subsequence $n_{j}\to +\infty$ such that 
$$h^{x,y}_{\infty,\alpha}=\lim_{j\to +\infty}h^{x,y}_{n_{j}}$$

Every $h^{x,y}_{\infty,\alpha}$ has its corresponding graph, and we note $W^{x}_{\infty,\alpha}(y)$ to the image by $\psi_{x}$ of this graph.

\begin{afi}
For every $z\in B(x,\epsilon)$ and every $\alpha \in \mathcal{J}^{x}_{z}$, we have that $H_{f}(W^{x}_{\infty,\alpha}(z))\subseteq \tilde{\mathcal{W}}^{cs}_{A}(H_{f}(z))=L_{H_{f}(z)}(G^{cs})$. 
\begin{proof}
Take $z\in B(x,\epsilon)$ and $\alpha \in \mathcal{J}^{x}_{z}$. Then by hypothesis there is subsequence $n_{j}\to +\infty$ such that $W^{x}_{n_{j}}(z)\to W^{x}_{\infty,\alpha}(z)$. Given $y\in W^{x}_{\infty,\alpha}(z)$, we want to prove that $H_{f}(y)\in \tilde{\mathcal{W}}^{cs}_{A}(H_{f}(z))$. Call $z_{n_{j}}=\tilde{\mathcal{F}}^{uu}(y)\cap W^{x}_{n_{j}}(z)$. Then $\tilde{f}^{n_{j}}(z_{n_{j}})\in \tilde{\mathcal{F}}^{cs}(\tilde{f}^{n_{j}}(z))$ and $z_{n_{j}}\to y$ when $j \to \infty$. If $H_{f}(y)=H_{f}(z)$ we're done. Suppose by the contrary that $H_{f}(z)\neq H_{f}(y)$. Then $H_{f}(z_{n_{j}})\to H_{f}(y)\neq H_{f}(z)$ by continuity of $H_{f}$. Note that $z$ and $z_{_{j}}$ belong to the same leaf $\tilde{\mathcal{F}}^{cs}$, and the same for $\tilde{f}^{n_{j}}(z)$ and $\tilde{f}^{n_{j}}(z_{n_{j}})$. Since $\mathcal{F}$ is almost parallel to $\mathcal{W}^{cs}_{A}$ we have that $\tilde{f}^{n_{j}}(z)$ and $\tilde{f}^{n_{j}}(z_{n_{j}})$ are at uniformly bounded distance respect to the $G^{uu}$ direction and the same happens with $H_{f}(\tilde{f}^{n_{j}}(z))$ and $H_{f}(\tilde{f}^{n_{j}}(z_{n_{j}}))$ because $H_{f}$ is $C^{0}$-close to the identity. By semiconjugacy we get that $A^{n_{j}}(H_{f}(z_{n_{j}}))$ and $A^{n_{j}}(H_{f}(z))$ are at uniformly bounded distance respect to the $G^{uu}$ direction. Since $A$ is Anosov this can happen only if $H_{f}(y)\in \tilde{\mathcal{W}}^{cs}_{A}(H_{f}(z))$, as we wanted to see.  
\end{proof}
\end{afi}

We are going to solve the two problems mentioned above in the same way. Suppose first that $z \in B(x,\epsilon)$ has two different limits $W^{x}_{\infty,\alpha}(z)$ and $W^{x}_{\infty,\beta}(z)$. Then there are points $z_{1}\in W^{x}_{\infty,\alpha}(z)$ and $z_{2}\in W^{x}_{\infty,\beta}(z)$ that belong to the same $\mathcal{W}^{uu}_{f}$-leaf. The previous claim implies that $H_{f}(z_{1})$ and $H_{f}(z_{2})$ belong to $\mathcal{W}^{cs}_{A}(H_{f}(z))$ and this can happen if and only if $H^{uu}_{f}(z_1)=H^{uu}_{f}(z_{2})$ which contradicts the injectivity of $H^{uu}_{f}$.

For the second problem we manage the same way. Lets suppose there are points $z_1 \neq z_2$ in $B(x,\epsilon)$ such that their limits $W^{x}_{\infty,\alpha}(z_1)$ and $W^{x}_{\infty,\beta}(z_2)$ have non empty intersection. Then we get two points $y_1 \in W^{x}_{\infty,\alpha}(z_1)$ and $y_2 \in W^{x}_{\infty,\beta}(z_{2})$ inside the same $\mathcal{W}^{uu}_{f}$ leaf. Again the previous claim said that $H^{uu}_{f}(z_1)=H^{uu}_{f}(z_{2})$ and this contradicts the injectivity of $H^{uu}_{f}$.  

To sum up, we obtained that for every $x\in G$ and every $y\in B(x,\epsilon)$, the limit $W^{x}_{\infty}(y)$ of the $W^{x}_{n}(y)$ leaves is unique, and for every pair of points $y,z \in B(x,\epsilon)$, their limits are disjoint or coincide. These limits are also $f$-invariant. To get that is truly a foliation, it's enough to observe the following: Given two points $z,w\in B(x,\epsilon)$, we have that $W^{x}_{\infty}(z)$ and $\mathcal{W}^{uu}_{f}(w)$ intersect in a unique point. Since the leaves of $\mathcal{W}^{uu}_{f}$ varies continuously and the plaques of $W^{x}_{\infty}$ either coincide or are disjoint, we get a continuous function from $\mathbb{D}^{cs}\times \mathbb{D}^{uu}$ to a neighbourhood of $x$ which sends horizontal disks to $W^{x}_{\infty}$-plaques. This proves that the plaques form a foliation. Since the leaves of the foliations are tangent to small cones around the $E^{cs}_{f}$ direction and also are $f$-invariant, we get that the foliation is tangent to $E^{cs}_{f}$.

Finally, we observe that the foliation $\tilde{\mathcal{W}}^{cs}_{f}$ has the same properties that $\tilde{\mathcal{F}}^{cs}$ thus we have GPS between $\mathcal{W}^{cs}_{f}$ and $\mathcal{W}^{uu}_{f}$.
\end{proof}

A symmetric statement holds for $f$ being $ss$-proper, so we obtain the following corollary.

\begin{cor} \label{ccdi}
If $f \in \textnormal{PH}_{A}(G/\Gamma)$ verifies the following conditions:
\begin{itemize}
\item $f$ is SADC.
\item $f$ is $uu$ and $ss$-proper.
\end{itemize}
Then $f$ is dynamically coherent and center fibered. 
\end{cor}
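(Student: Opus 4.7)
The plan is to apply Theorem \ref{tcdi} twice, once directly and once symmetrically, and then combine the two resulting foliations to produce the center foliation. Concretely, from Theorem \ref{tcdi} applied to the $uu$-proper SADC hypothesis I obtain an $f$-invariant foliation $\mathcal{W}^{cs}_f$ tangent to $E^{ss}_f\oplus E^c_f$, satisfying $H_f^{-1}(\tilde{\mathcal{W}}^{cs}_A(H_f(x)))=\tilde{\mathcal{W}}^{cs}_f(x)$, and having GPS with $\tilde{\mathcal{W}}^{uu}_f$. The symmetric argument (swapping the roles of stable and unstable throughout the proof of Theorem \ref{tcdi}, which is legitimate because SADC is symmetric in $cs$ and $cu$) produces $\mathcal{W}^{cu}_f$ tangent to $E^c_f\oplus E^{uu}_f$ with the analogous semiconjugacy identity $H_f^{-1}(\tilde{\mathcal{W}}^{cu}_A(H_f(x)))=\tilde{\mathcal{W}}^{cu}_f(x)$ and GPS with $\tilde{\mathcal{W}}^{ss}_f$.

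Next I would produce the center foliation by intersecting leaves. The two invariant foliations $\mathcal{W}^{cs}_f$ and $\mathcal{W}^{cu}_f$ are transverse everywhere, since their tangent bundles meet precisely along $E^c_f$. Because each is a $C^0$ foliation with $C^1$ leaves, their intersection is locally a $C^0$ foliation tangent to $E^c_f$; I would then upgrade this to a global $f$-invariant foliation $\mathcal{W}^c_f$ by defining $\tilde{\mathcal{W}}^c_f(x)$ to be the connected component of $\tilde{\mathcal{W}}^{cs}_f(x)\cap\tilde{\mathcal{W}}^{cu}_f(x)$ containing $x$, and checking with the local product structure coming from transversality that this gives a genuine foliation. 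The $f$-invariance is inherited from the invariance of $\mathcal{W}^{cs}_f$ and $\mathcal{W}^{cu}_f$. This establishes dynamical coherence.

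For the center-fibered property, I would use the algebraic identity $\tilde{\mathcal{W}}^c_A(y)=\tilde{\mathcal{W}}^{cs}_A(y)\cap\tilde{\mathcal{W}}^{cu}_A(y)$, which holds at $y=e$ since $G^c=G^{cs}\cap G^{cu}$ (both sides are the connected Lie subgroup corresponding to $\mathfrak{g}^c=\mathfrak{g}^{cs}\cap\mathfrak{g}^{cu}$, using Remark \ref{obsliesub}) and propagates to every $y$ by left translation. Combined with the two semiconjugacy identities this yields
\begin{equation*}
H_f^{-1}(\tilde{\mathcal{W}}^c_A(H_f(x)))=H_f^{-1}(\tilde{\mathcal{W}}^{cs}_A(H_f(x)))\cap H_f^{-1}(\tilde{\mathcal{W}}^{cu}_A(H_f(x)))=\tilde{\mathcal{W}}^{cs}_f(x)\cap\tilde{\mathcal{W}}^{cu}_f(x),
\end{equation*}
which I would identify with $\tilde{\mathcal{W}}^c_f(x)$ once I verify that the right-hand side is connected; connectedness follows because the intersection of each pair $\tilde{\mathcal{W}}^{cs}_f(x)\cap\tilde{\mathcal{W}}^{cu}_f(x)$ is saturated both by $\tilde{\mathcal{W}}^{ss}_f$ (inside $\tilde{\mathcal{W}}^{cs}_f$) and $\tilde{\mathcal{W}}^{uu}_f$ (inside $\tilde{\mathcal{W}}^{cu}_f$), and the GPS statements from Theorem \ref{tcdi} force only one connected component to appear.

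The main obstacle I anticipate is precisely this last connectedness issue for $\tilde{\mathcal{W}}^{cs}_f(x)\cap\tilde{\mathcal{W}}^{cu}_f(x)$: a priori this set could have multiple components lying on different center leaves, which would break the center-fibered conclusion. I would handle this by a graph-transform/covering-map argument analogous to the one in the proof of Theorem \ref{tcdi}: inside a single $\tilde{\mathcal{W}}^{cs}_f$ leaf one uses the GPS of $\tilde{\mathcal{W}}^{c}_A$ and $\tilde{\mathcal{W}}^{ss}_A$ inside $\tilde{\mathcal{W}}^{cs}_A$ (Proposition \ref{ppg}(4)) together with $ss$-properness to show that $H_f$ restricted to a plaque of the putative center foliation is a covering of the corresponding $\tilde{\mathcal{W}}^c_A$-leaf, hence a homeomorphism by contractibility of the leaf, which forces uniqueness of the component and completes the identification.
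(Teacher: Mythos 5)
Your overall strategy is the right one and matches what the paper treats as immediate: apply Theorem \ref{tcdi} directly (using $uu$-properness) to produce $\mathcal{W}^{cs}_f$ with $H_f^{-1}(\tilde{\mathcal{W}}^{cs}_A(H_f(x)))=\tilde{\mathcal{W}}^{cs}_f(x)$ and GPS with $\tilde{\mathcal{W}}^{uu}_f$, then symmetrically (using $ss$-properness) to get $\mathcal{W}^{cu}_f$, and obtain the center foliation by intersecting leaves. You have also correctly isolated the one nontrivial point: showing that the full set $\tilde{\mathcal{W}}^{cs}_f(x)\cap\tilde{\mathcal{W}}^{cu}_f(x)$, which by the two semiconjugacy identities and $G^c=G^{cs}\cap G^{cu}$ equals $H_f^{-1}(\tilde{\mathcal{W}}^c_A(H_f(x)))$, is \emph{connected}, so that it is the actual center leaf through $x$.

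However, your two arguments for this connectedness both have problems. First, the set $\tilde{\mathcal{W}}^{cs}_f(x)\cap\tilde{\mathcal{W}}^{cu}_f(x)$ is \emph{not} saturated by $\tilde{\mathcal{W}}^{ss}_f$ or $\tilde{\mathcal{W}}^{uu}_f$: a strong stable leaf through a point of the intersection leaves $\tilde{\mathcal{W}}^{cu}_f(x)$ transversally, so that claim is simply false. Second, the covering-map argument as you phrase it proves that each connected component of the intersection maps homeomorphically onto $\tilde{\mathcal{W}}^c_A(H_f(x))$, but this does not by itself rule out two disjoint components both covering the same contractible $\tilde{\mathcal{W}}^c_A$-leaf; you would additionally need injectivity of $H_f$ on the whole intersection, which you have not established. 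The clean way to close the gap uses precisely the GPS you already have in hand: set $P=\tilde{\mathcal{W}}^{cs}_f(x)\cap\tilde{\mathcal{W}}^{cu}_f(x)$ and define $\pi:\tilde{\mathcal{W}}^{cu}_f(x)\to\tilde{\mathcal{W}}^{cs}_f(x)$ by sending $z$ to the unique point $\tilde{\mathcal{W}}^{uu}_f(z)\cap\tilde{\mathcal{W}}^{cs}_f(x)$, which exists and is unique by the GPS of $\tilde{\mathcal{W}}^{cs}_f$ and $\tilde{\mathcal{W}}^{uu}_f$ furnished by Theorem \ref{tcdi}. Since $\tilde{\mathcal{W}}^{uu}_f(z)\subset\tilde{\mathcal{W}}^{cu}_f(z)=\tilde{\mathcal{W}}^{cu}_f(x)$, the image of $\pi$ lies in $P$, and for $p\in P$ one has $\pi(p)=p$; thus $\pi$ is a continuous retraction of the connected leaf $\tilde{\mathcal{W}}^{cu}_f(x)$ onto $P$, forcing $P$ to be connected. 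With this substitution your proof is complete and follows the paper's intended route.
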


\begin{obs}
We want to remark that in every integrability theorem, the uniqueness of the leaves is always a local problem. In this case, the solution is global since it uses the asymptotic behaviour of the leaves of the SADC foliation for $f$.
\end{obs}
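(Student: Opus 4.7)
The plan is to substantiate the remark by isolating the precise step in the proof of Theorem~\ref{tcdi} at which uniqueness of the limit leaves is forced, and by explaining why this step cannot be closed inside any single coordinate chart, in contrast to a classical Frobenius or Hadamard graph-transform argument. First I would recall the standard local picture: for a $C^1$ involutive distribution one obtains uniqueness of the integral leaf through each point by reducing, on a small adapted chart, to a unique solution of an ODE, so two candidate integral manifolds agreeing at a single point must coincide on their common chart. In the partially hyperbolic setting the distribution $E^{cs}_f$ is only H\"older and not known a priori to be involutive, so this classical reduction is unavailable.

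Next I would examine the construction of $\tilde{\mathcal{W}}^{cs}_f$ in the proof of Theorem~\ref{tcdi}, where Arzel\`a--Ascoli on the precompact family $\{h^{x,y}_n\}$ of Lipschitz graphs in the chart $\psi_x:\mathbb{D}^{cs}\times\mathbb{D}^{uu}\to V_x$ produces limit plaques only up to subsequence. Working inside $V_x$ by itself does not rule out either of the two local pathologies that arise: a single $y\in B(x,\epsilon)$ might admit distinct limits $W^x_{\infty,\alpha}(y)\neq W^x_{\infty,\beta}(y)$, or two distinct base points $z_1\neq z_2$ might have merging limits. Each candidate limit is only $C^0$-close to the topological foliation $\tilde{\mathcal{F}}^{cs}$ supplied by SADC, which has no a priori tangency to $E^{cs}_f$, so no purely local data separate or identify these graphs.

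The main observation is that both obstructions are simultaneously resolved by the same non-local argument. Any such failure produces two distinct points $y_1,y_2$ lying on a common leaf $\tilde{\mathcal{W}}^{uu}_f$, and the preceding claim $H_f(W^x_{\infty,\alpha}(z))\subseteq \tilde{\mathcal{W}}^{cs}_A(H_f(z))$ then forces both $H_f(y_1)$ and $H_f(y_2)$ into a single center-stable leaf of $A$; hence $H^{uu}_f(y_1)=H^{uu}_f(y_2)$, contradicting the injectivity $(I^{uu})$ built into $uu$-properness. The hard part---and the content of the remark---is that the claim invoked here is itself asymptotic: it is proved by approximating $y$ by $z_{n_j}\in W^x_{n_j}(z)$, pushing forward by $\tilde f^{n_j}$, and then combining the uniform Hausdorff bound provided by SADC (a statement about entire leaves of $\mathcal{F}^{cs}$, not a germ at $y$) with the Anosov expansion of $A$ along $G^{uu}$ as $n_j\to\infty$. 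Neither ingredient can be replaced by data on a fixed neighborhood of $y$, so the uniqueness of $\tilde{\mathcal{W}}^{cs}_f$ lives at the level of the global dynamics of $\tilde f$ and $A$ rather than at the local geometry of the distribution $E^{cs}_f$, which is precisely the point the remark is making.
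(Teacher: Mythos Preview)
This statement is an \emph{obs} (remark) in the paper and carries no proof of its own; it is a one-sentence commentary on the proof of Theorem~\ref{tcdi} that immediately precedes it. Your proposal is therefore not competing with any argument in the paper but rather unpacking the remark, and it does so correctly: you isolate exactly the two local pathologies (non-unique subsequential limits and merging of limit plaques) that arise from the Arzel\`a--Ascoli step, and you correctly identify that both are excluded by the claim $H_f(W^x_{\infty,\alpha}(z))\subset \tilde{\mathcal{W}}^{cs}_A(H_f(z))$ together with $(I^{uu})$, whose proof in turn rests on the global SADC Hausdorff bound and the asymptotic expansion of $A$ along $G^{uu}$---precisely the ``asymptotic behaviour of the leaves of the SADC foliation'' the remark refers to.

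One small point of phrasing: the remark is saying that the uniqueness \emph{problem} is local (it is posed plaque-by-plaque in the charts $V_x$) while its \emph{resolution} is global; your final sentence slightly blurs this by saying that ``the uniqueness \dots\ lives at the level of the global dynamics,'' which could be read as asserting the problem itself is global. The distinction you draw throughout the body of your proposal is the right one, so this is only a wording issue in the last line.
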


\section{proof of the theorem}
To obtain the main theorem, we have to prove that SADC and $\sigma$-properness ($\sigma=ss,uu$) are $C^{1}$ open and closed properties among partially hyperbolic diffeomorphisms in $G/\Gamma$ isotopic to $A$. Then we can apply corolary \ref{ccdi} to a whole connected component as long as it contains diffeomorphism with such properties. The proofs are as \cite{FPS} adapted to nilmanifold case.

\begin{prop} \label{tsadcayc} Being SADC is a $C^{1}$ open and closed property among $\textnormal{PH}_{A}(G/ \Gamma)$.
\end{prop}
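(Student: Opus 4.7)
The plan is to exploit that the SADC condition decomposes into two independent clauses on the auxiliary foliations $\mathcal{F}^{cs}, \mathcal{F}^{cu}$: (i) transversality to $E^{uu}_{f}$ and $E^{ss}_{f}$, which is a $C^{0}$-open condition on the strong bundles; and (ii) almost parallelism to $\mathcal{W}^{cs}_{A}$ and $\mathcal{W}^{cu}_{A}$, which does not involve $f$ at all because $A$ is fixed. Both openness and closedness will then follow from the continuous dependence of the strong bundles on the diffeomorphism in the $C^{1}$ topology.

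For openness, I would take $f \in \textnormal{PH}_{A}(M)$ SADC with witnessing foliations $\mathcal{F}^{cs}, \mathcal{F}^{cu}$, and any $g$ that is $C^{1}$-close to $f$. By the standard continuity of the invariant splitting for partially hyperbolic diffeomorphisms (see \cite{HPS}), the bundles $E^{ss}_{g}$ and $E^{uu}_{g}$ lie in arbitrarily small $C^{0}$-neighbourhoods of $E^{ss}_{f}$ and $E^{uu}_{f}$. Since transversality between a foliation with continuous tangent distribution and a continuous distribution is a $C^{0}$-open condition (the angle function is continuous and bounded below away from zero by compactness of $M$), $\mathcal{F}^{cs}$ remains transverse to $E^{uu}_{g}$ and $\mathcal{F}^{cu}$ remains transverse to $E^{ss}_{g}$. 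The almost parallel requirement relates only $\mathcal{F}^{\sigma}$ with $\mathcal{W}^{\sigma}_{A}$, and neither object depends on $g$, so it is preserved for free. Hence $g$ is SADC with the very same witnessing foliations.

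For closedness, suppose $f_{n} \to f$ in $C^{1}$ with each $f_{n}$ SADC, witnessed by some $\mathcal{F}^{cs}_{n}, \mathcal{F}^{cu}_{n}$. I would simply fix $n_{0}$ large enough that $f$ falls inside the $C^{1}$-neighbourhood of $f_{n_{0}}$ produced by the openness argument above. Then $\mathcal{F}^{cs}_{n_{0}}, \mathcal{F}^{cu}_{n_{0}}$ certify SADC of $f$ as well: transversality to $E^{uu}_{f}$ and $E^{ss}_{f}$ is inherited from transversality to the $C^{0}$-close $E^{uu}_{f_{n_{0}}}, E^{ss}_{f_{n_{0}}}$, and almost parallelism to $\mathcal{W}^{cs}_{A}, \mathcal{W}^{cu}_{A}$ is a property of the foliations alone.

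There is no serious obstacle here once one observes that the almost parallel clause of SADC is entirely dynamics-free; the only quantitative ingredient is the $C^{0}$-continuity of $E^{ss}_{f}, E^{uu}_{f}$ under $C^{1}$ perturbations of $f$, which is classical. The fact that the same auxiliary foliations can be reused for nearby diffeomorphisms is really the content of the proposition, and it is precisely what makes SADC (rather than, say, dynamical coherence itself) the right notion to propagate through connected components of $\textnormal{PH}_{A}(M)$.
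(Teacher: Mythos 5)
Your openness argument matches the paper's: keep the same witnessing foliations and observe that the transversality angle to $E^{uu}_{f}$, $E^{ss}_{f}$ only degrades by a small amount under a $C^{1}$-small perturbation. That part is fine.

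The closedness argument, however, has a genuine gap. You write that you will ``fix $n_{0}$ large enough that $f$ falls inside the $C^{1}$-neighbourhood of $f_{n_{0}}$ produced by the openness argument,'' but the radius of that neighbourhood is \emph{not uniform in $n_{0}$}: it is controlled by the transversality angle between $\mathcal{F}^{cs}_{n_{0}}$ and $E^{uu}_{f_{n_{0}}}$, a quantity that the SADC hypothesis only guarantees is positive, not bounded below. Nothing prevents $\sphericalangle(\mathcal{F}^{cs}_{n},E^{uu}_{f_{n}})$ from shrinking to zero as $n\to\infty$, in which case the openness neighbourhoods around the $f_{n}$ shrink faster than $d_{C^{1}}(f_{n},f)$ does, and no $n_{0}$ ever works. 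In other words, openness of the SADC set does not give you closedness for free; these are logically independent.

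The paper closes this gap with an extra dynamical step that you are missing. Fix $N$ large enough that $E^{cs}_{f_{N}}$ and $E^{uu}_{f_{N}}$ are close to $E^{cs}_{f}$ and $E^{uu}_{f}$, so that $\sphericalangle(E^{uu}_{f},E^{cs}_{f_{N}})>\alpha/2$ where $\alpha=\sphericalangle(E^{cs}_{f},E^{uu}_{f})$. Take the SADC foliation $\mathcal{F}^{cs}_{N}$ for $f_{N}$ and iterate it backwards: partial hyperbolicity of $f_{N}$ forces $f_{N}^{-m}(\mathcal{F}^{cs}_{N})$ into a cone of radius $\alpha/2$ around $E^{cs}_{f_{N}}$ for $m$ large, and \emph{this} foliation is uniformly transverse to $E^{uu}_{f}$ with an angle bound coming from $\alpha$, not from the uncontrolled initial transversality angle of $\mathcal{F}^{cs}_{N}$. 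One also has to check that $f_{N}^{-m}(\mathcal{F}^{cs}_{N})$ is still almost parallel to $\mathcal{W}^{cs}_{A}$, which the paper handles by noting that $f_{N}$ is isotopic to $A$ and so preserves the almost-parallel class to any $A$-invariant subgroup; this too is a point your proposal does not address, since you only ever use a single fixed foliation and never need to move one by the dynamics.
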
 
\begin{proof}
Open is trivial since the same foliation works because the continuity of the $E^{ss}$ and $E^{uu}$ bundles: take the foliations $\mathcal{F}^{cs}_{f}$, $\mathcal{F}^{cu}_{f}$ given by the SADC property. These are transverse to $E^{uu}_{f}$, $E^{ss}_{f}$ and almost parallel to $\mathcal{W}^{cs}_{A}$, $\mathcal{W}^{cu}_{A}$ respectively. Then $\sphericalangle(\mathcal{F}^{cs}_{f},E^{uu}_{f})>\epsilon$ for every $x\in G$.
Then there is $\mathcal{U}(f)$ neighbourhood of $f$ in the $C^{1}$ topology such that for every $g \in \mathcal{U}(f)$ we have $\sphericalangle (E^{uu}_{g}(x),E^{uu}_{f}(x))<\frac{\epsilon}{2}$ for every $x\in G$. Take $\mathcal{F}^{cs}_{g}=\mathcal{F}^{cs}_{f}$, then
\begin{eqnarray*}
(\mathcal{F}^{cs}_{g},E^{uu}_{g})+\frac{\epsilon}{2} & > & \sphericalangle (\mathcal{F}^{cs}_{g}(x),E^{uu}_{g}(x))+\sphericalangle (E^{uu}_{g}(x),E^{uu}_{f}(x))\\
&\geq & \sphericalangle (\mathcal{F}^{cs}_{g}(x),E^{uu}_{f}(x))\\
&=&\sphericalangle (\mathcal{F}^{cs}_{f}(x),E^{uu}_{f}(x))>\epsilon >0
\end{eqnarray*}
This implies that $\sphericalangle (\mathcal{F}^{cs}_{g}(x),E^{uu}_{g}(x))>\frac{\epsilon}{2}$ for every $x \in G$.
Thus every $g\in \mathcal{U}(f)$ has foliations $\mathcal{F}^{cs}_{g}$, $\mathcal{F}^{cu}_{g}$ transverse to $E^{uu}_{g}$, $E^{ss}_{g}$. Hence each $g\in \mathcal{U}(f)$ verifies SADC.

For closeness, first note that since $f_{n}$ is isotopic to $A$, it fixes the class of foliations almost parallel to any $A$-invariant subgroup. On the other hand, take $f_{n}\rightarrow f$ in the $C^{1}$ topology such that every $f_{n}$ is SADC. Call $E^{cs}_{n}=E^{ss}_{f_{n}}\oplus E^{c}_{f_{n}}$. By the $C^{1}$ convergence we have $E^{cs}_{n}\rightarrow E^{cs}_{f}$, $E^{uu}_{n}\rightarrow E^{uu}_{f}$. Let $\alpha=\sphericalangle (E^{cs}_{f},E^{uu}_{f})$ (minimum bound of the angle). Now since $E^{cs}_{n}\rightarrow E^{cs}_{f}$ there is $N>0$ such that $\sphericalangle (E^{uu}_{f},E^{cs}_{N})>\frac{\alpha}{2}$. Take $\mathcal{F}^{cs}_{N}$ foliation uniformly transverse to $E^{uu}_{N}$. Then there is $m>0$ such that $f_{N}^{-m}(\mathcal{F}^{cs}_{N})$ is contained in a cone centred at $E^{cs}_{N}$ of radius $\frac{\alpha}{2}$. Thus $f_{N}^{-m}(\mathcal{F}^{cs}_{N})$ is uniformly transverse to $E^{uu}_{f}$. This finish the proof.
\end{proof}

Before looking at the next proposition let us make the following classical remark. 

\begin{obs} \label{rkhyp}
For $f\in \textnormal{PH}(M)$, there exist constants $1<\lambda_{f}<\Delta_{f}$ such that in a $C^{1}$-neighbourhood $\mathcal{U}$ of $f$ we have:
$$D^{uu}_{g}(\tilde{g}(x),\lambda_{f}R)\subset \tilde{g}(D^{uu}_{g}(x,R))\subset D^{uu}_{g}(\tilde{g}(x),\Delta_{f}R)
$$ for every $g\in \mathcal{U}$, $x\in \tilde{M}$ and $R>0$. Analogously for $D^{ss}$ by applying $\tilde{g}^{-1}$.
\end{obs}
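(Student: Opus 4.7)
The plan is to reduce this global statement about the balls $D^{uu}_g$ to a pointwise bound on the action of $Dg$ on $E^{uu}_g$, and then to exploit the $C^1$-openness of that bound. The content is essentially the usual observation that strong unstable leaves are uniformly expanded by the dynamics, and this expansion is robust.

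First, I would pass to a Riemannian metric adapted to $f$ (standard Mather-type averaging along iterates of $Df$), in which the partial hyperbolicity hypothesis can be sharpened to the one-step inequality $\|Df_x v\| \geq \mu \|v\|$ for every $x \in M$ and every $v \in E^{uu}_f(x)$, for some $\mu > 1$; the matching upper bound $\|Df_x v\| \leq \mu' \|v\|$ is automatic from continuity of $Df$ and compactness of $M$. Pick constants $1 < \lambda_f < \mu$ and $\Delta_f > \mu'$ with some slack on both sides. Since the strong unstable distribution depends continuously on the diffeomorphism in the $C^1$ topology (see \cite{HPS}) and $Dg$ depends continuously on $g$, for $g$ in a sufficiently small $C^1$-neighbourhood $\mathcal{U}$ of $f$ one has the uniform pointwise bounds
$$\lambda_f \|v\| \;\leq\; \|Dg_x v\| \;\leq\; \Delta_f \|v\| \qquad \forall\, x \in M, \ \forall\, v \in E^{uu}_g(x).$$

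Next, I would integrate these pointwise bounds along curves inside $\tilde{\mathcal{W}}^{uu}_g$-leaves: any rectifiable curve $\gamma$ contained in such a leaf has tangent vectors in $E^{uu}_g$ almost everywhere, so $\lambda_f \cdot \mathrm{length}(\gamma) \leq \mathrm{length}(\tilde{g}\gamma) \leq \Delta_f \cdot \mathrm{length}(\gamma)$. From this both inclusions follow directly. If $y \in D^{uu}_g(x,R)$, take a leafwise curve from $x$ to $y$ of length less than $R$; its $\tilde{g}$-image is a leafwise curve from $\tilde{g}(x)$ to $\tilde{g}(y)$ of length less than $\Delta_f R$, giving the right-hand inclusion. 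Conversely, for $z \in D^{uu}_g(\tilde{g}(x),\lambda_f R)$, write $z = \tilde{g}(y)$ with $y \in \tilde{\mathcal{W}}^{uu}_g(x)$ (since $\tilde{g}$ sends $\tilde{\mathcal{W}}^{uu}_g$-leaves bijectively onto $\tilde{\mathcal{W}}^{uu}_g$-leaves), and pull back a leafwise curve from $\tilde{g}(x)$ to $z$ of length less than $\lambda_f R$ to obtain a curve from $x$ to $y$ of length less than $R$, giving the left-hand inclusion. The analogous statement for $D^{ss}$ is obtained by applying the above to $g^{-1}$ with $E^{uu}$ replaced by $E^{ss}$.

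The main, and rather mild, obstacle is ensuring that the pointwise expansion rate on $E^{uu}$ exceeds $1$ at a single step. The definition of partial hyperbolicity stated in the paper only guarantees this after $n$ iterates (through the constant $C$), so either a high iterate of $f$ must be considered, or, more cleanly, an adapted metric must be constructed first. Once that preliminary is in place, the rest is the routine length-integration argument combined with continuous dependence of $E^{uu}_g$ on $g$.
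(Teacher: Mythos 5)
Your proof is correct, and it is the standard argument; the paper itself gives no proof, simply labeling this a ``classical remark.'' You correctly identify the one real wrinkle: the definition of partial hyperbolicity in the paper only gives contraction/expansion after $n$ iterates through a multiplicative constant $C$, so one must first pass to an adapted metric (or a fixed high iterate) to get a single-step expansion rate $\mu>1$ on $E^{uu}$; the paper's bibliography even contains \cite{Gou} for exactly this purpose, though it is never cited in the text. After that, taking $1<\lambda_f<\mu$ and $\Delta_f>\sup\lVert Df\rVert$ with slack, the uniformity over a $C^1$-neighbourhood follows from $C^0$-continuity of $g\mapsto E^{uu}_g$ together with continuity of $g\mapsto Dg$, and the two inclusions are obtained by integrating the pointwise bounds along leafwise curves and taking infima over such curves, exactly as you do. The only phrasing I would tighten: to get $\lambda_f\,d_{\tilde{\mathcal{W}}^{uu}_g}(x,y)\le d_{\tilde{\mathcal{W}}^{uu}_g}(\tilde g(x),\tilde g(y))\le\Delta_f\,d_{\tilde{\mathcal{W}}^{uu}_g}(x,y)$ one should pull back (resp.\ push forward) an arbitrary near-minimizing leafwise curve rather than a fixed one, but this is purely cosmetic and the substance of your argument is sound.
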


\begin{prop} \label{pspopen}
Being $\sigma$-proper is a $C^{1}$-open property in $\textnormal{PH}_{A}(G/\Gamma)$. 

\begin{proof}
Given $f\in \textnormal{PH}_{A}(G/\Gamma)$, we must find a neighbourhood $\mathcal{U}(f)$ in the $C^{1}$ topology such that every $g\in \mathcal{U}(f)$ is $\sigma$-proper. Remark \ref{osp1} said that it's enough to find a neighbourhood $\mathcal{U}(f)$ and $R_{1}>0$ such that for every  $g\in \mathcal{U}(f)$ and $x\in G$:
$$ (H^{\sigma}_{g})^{-1}(D_{A}^{\sigma}(H_{g}(x),1))\cap \tilde{\mathcal{W}}_{g}^{\sigma}(x)\subseteq D_{g}^{\sigma}(x,R_{1})
$$
Since $f$ is $\sigma$-proper, we know that $H^{\sigma}_{f}|_{\tilde{\mathcal{W}}^{\sigma}_{f}(x)}:\tilde{\mathcal{W}}^{\sigma}_{f}(x)\to \tilde{\mathcal{W}}^{\sigma}_{A}(H_{f}(x)) $ is a homeomorphism. Then, there is $R_{1}>0$ such that $$H^{\sigma}_{f}(D^{\sigma}_{f}(x,R_{1})^{c})\cap D^{\sigma}_{A}(H_{f}(x),2)=\emptyset $$
Call $A^{\sigma}_{r,R,g}(x)$ the annulus $D^{\sigma}_{g}(x,R)\setminus D^{\sigma}_{g}(x,r)$ for $R>r>0$. Then for $R_{2}>\Delta_{f}R_{1}$ we have that
$$H^{\sigma}_{f}(A^{\sigma}_{R_{1},R_{2},f}(x))\cap D^{\sigma}_{A}(H_{f}(x),2)=\emptyset
$$ where we take $\Delta_{f}>1$ like in the previous remark. We observe now that $h_{f}$ is uniformly continuous and $H_{f}$ is a lift of $h_{f}$, then $H_{f}$ is uniformly continuous too. Then there is $\epsilon_{1}>0$ such that if $d(x,y)<\epsilon_{1}$ then $d(H_{f}(x),H_{f}(y))<1/4$. Take the following $C^{1}$-neighbourhoods:
\begin{itemize}
\item From uniform hyperbolicity we have there is $\mathcal{U}_{1}(f)$ such that the constants $\Delta_{f}$ and $\lambda_{f}$ are uniform in $\mathcal{U}_{1}(f)$ (see Remark \ref{rkhyp}).
\item The continuous variation of the leaves in the $C^{1}$ topology said that for every $\epsilon_{1}>0$ and $R_{2}>0$, there is $\mathcal{U}_{2}(f)$ and $\delta>0$ such that for every $g\in \mathcal{U}_{2}(f)$ and every pair of points $x,y$ with $d(x,y)<\delta$ we have $d_{C^{1}}(D^{\sigma}_{g}(x,R_{2}),D^{\sigma}_{g}(y,R_{2}))<\epsilon_{1}$.
\item Take $\mathcal{U}_{3}(f)=\{g\in \textnormal{PH}_{A}(G/\Gamma): d_{C^{0}}(H_{f},H_g)<1/4\}$.
\end{itemize}
Finally take $\mathcal{U}_{f}=\mathcal{U}_{1}(f)\cap \mathcal{U}_{2}(f)\cap \mathcal{U}_{3}(f)$.
Now, let $g\in \mathcal{U}(f)$ and $x,y$ such that $y\in A^{\sigma}_{R_{1},R_{2},g}(x)$. Then there is $z\in A^{\sigma}_{R_{1},R_{2},f}(x)$ such that $d(z,y)<\epsilon_{1}$ and from uniform continuity we get $d(H_{f}(z),H_{f}(y))<1/4$. Since $z\in A^{\sigma}_{R_{1},R_{2},f}(x)$ and $d(H_{f}(z),H_{f}(y))<1/4$, applying the triangular inequality we obtain:
\begin{eqnarray*}
2&<&d(H_{f}^{\sigma}(z),H^{\sigma}_{f}(x))\leq d(H^{\sigma}_{f}(z),H^{\sigma}_{f}(y))+d(H^{\sigma}_{f}(y),H^{\sigma}_{f}(x)) \\ 
&\leq & 1/4+ d(H^{\sigma}_{f}(y),H^{\sigma}_{f}(x))
\end{eqnarray*}
Therefore $d(H^{\sigma}_{f}(y),H^{\sigma}_{f}(x))>2-1/4$. Once again the triangular inequality gives:
\begin{eqnarray*}
2-1/4 &<& d(H^{\sigma}_{f}(y),H^{\sigma}_{f}(x)) \\
&\leq& d(H^{\sigma}_{f}(y),H^{\sigma}_{g}(y))+d(H^{\sigma}_{g}(y),H^{\sigma}_{g}(x))+d(H^{\sigma}_{g}(x),H^{\sigma}_{f}(x)) \\
&\leq & 1/4 +d(H^{\sigma}_{g}(y),H^{\sigma}_{g}(x))+1/4
\end{eqnarray*} and we conclude that $d(H^{\sigma}_{g}(y),H^{\sigma}_{g}(x))>2-3/4>1$, which means 
\begin{equation} \label{ecanillo}
H^{\sigma}_{g}(A^{\sigma}_{R_{1},R_{2},g}(x))\cap D^{\sigma}_{A}(H_{g}(x),1)=\emptyset
\end{equation} 
Finally this implies
$$(H^{\sigma}_{g})^{-1}(D_{A}^{\sigma}(H_{g}(x),1))\cap \tilde{\mathcal{W}}_{g}^{\sigma}(x)\subseteq D_{g}^{\sigma}(x,R_{1})
$$
If it weren't the case, there will be $y\in \tilde{\mathcal{W}}^{\sigma}_{g}(x)$ such that $H_{g}^{\sigma}(y)\in D^{\sigma}_{A}(H_{g}(x),1)$ but $y\notin D^{\sigma}_{g}(x,R_{2})$. By the choice of $\Delta_{f}$ we know that there is $n\in \mathbb{Z}$ such that $\tilde{g}^{n}(y)\in A^{\sigma}_{R_{1},R_{2},g}(\tilde{g}^{n}(x))$ and $H^{\sigma}_{g}(\tilde{g}^{n}(x))\in D^{\sigma}_{g}(\tilde{g}^{n}(x),1)$. This contradicts \ref{ecanillo} above. 
\end{proof}

\end{prop}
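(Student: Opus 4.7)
The plan is to exploit three stability features of partially hyperbolic systems under $C^1$ perturbation: (a) uniformity of the hyperbolicity constants $\lambda_f,\Delta_f$ on a $C^1$-neighborhood (Remark \ref{rkhyp}); (b) continuous variation of the strong invariant leaves in the $C^1$-topology; and (c) $C^0$-continuity of the semiconjugacy $H_g$ obtained from Franks' construction as $g$ varies. By Remark \ref{osp1}, it suffices to produce a neighborhood $\mathcal U(f)$ of $f$ and a radius $R_1>0$ such that for every $g\in\mathcal U(f)$ and every $x\in G$ one has
\begin{equation*}
(H^{\sigma}_{g})^{-1}\bigl(D^{\sigma}_{A}(H_{g}(x),1)\bigr)\cap\tilde{\mathcal{W}}^{\sigma}_{g}(x)\subseteq D^{\sigma}_{g}(x,R_{1}).
\end{equation*}

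First I would record a quantitative consequence of $f$ being $\sigma$-proper with a margin: pick $R_1>0$ so that $H^{\sigma}_f$ sends $\tilde{\mathcal{W}}^{\sigma}_f(x)\setminus D^{\sigma}_f(x,R_1)$ outside $D^{\sigma}_A(H_f(x),2)$, and then fix $R_2>\Delta_f R_1$ so that the same conclusion holds on the annulus $A^{\sigma}_{R_1,R_2,f}(x)$. Next I would intersect three neighborhoods of $f$: one on which the hyperbolicity constants are controlled by $\lambda_f,\Delta_f$, one on which strong disks of radius $R_2$ vary $C^1$-continuously (so that every point of $A^{\sigma}_{R_1,R_2,g}(x)$ is $\epsilon_1$-close to some point of $A^{\sigma}_{R_1,R_2,f}(x)$ for a prescribed $\epsilon_1$), and one on which $d_{C^0}(H_g,H_f)<1/4$. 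Combining with the uniform continuity of $H_f$ (so that $d<\epsilon_1$ implies $d\circ H_f<1/4$), three applications of the triangle inequality will yield
\begin{equation*}
H^{\sigma}_{g}\bigl(A^{\sigma}_{R_{1},R_{2},g}(x)\bigr)\cap D^{\sigma}_{A}(H_{g}(x),1)=\emptyset.
\end{equation*}

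Finally I would upgrade this annular estimate to the full leaf by the classical iteration trick: any $y\in\tilde{\mathcal{W}}^{\sigma}_g(x)$ with $d_{\tilde{\mathcal{W}}^{\sigma}_g}(x,y)>R_2$ can, by the choice of $\Delta_f$ and the uniformity of hyperbolic constants on $\mathcal U(f)$, be brought into the annulus $A^{\sigma}_{R_1,R_2,g}(\tilde g^{n}(x))$ under a suitable iterate $\tilde g^{n}$. The semiconjugacy relation $A\circ H_g=H_g\circ\tilde g$ together with the invariance of the strong foliations of $A$ then transports any assumed membership of $H^{\sigma}_g(y)$ in $D^{\sigma}_A(H_g(x),1)$ to a membership of $H^{\sigma}_g(\tilde g^{n}(y))$ in $D^{\sigma}_A(H_g(\tilde g^{n}(x)),1)$, contradicting the annular estimate just established. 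I expect the main obstacle to be the bookkeeping in the third step: one must simultaneously control the geometric distortion of strong disks, the $C^0$-distance between $H_f$ and $H_g$, and the uniform continuity modulus of $H_f$, all on the universal cover rather than on $M$ itself; verifying that the semiconjugacies $H_g$ vary $C^0$-continuously with $g$ uniformly on $G$ (and not merely on compact sets) is what makes the argument genuinely open rather than just pointwise.
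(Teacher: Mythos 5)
Your proposal follows the paper's proof essentially verbatim: the same margin trick (choosing $R_1$ so that $H^{\sigma}_f$ pushes the complement outside $D^{\sigma}_A(H_f(x),2)$), the same annulus $A^{\sigma}_{R_1,R_2,f}(x)$ with $R_2>\Delta_f R_1$, the same three $C^1$-neighborhoods (uniform hyperbolicity constants, continuous variation of strong disks, $d_{C^0}(H_f,H_g)<1/4$) combined with the uniform-continuity modulus $\epsilon_1$ of $H_f$ and two triangle inequalities to get the annular estimate, and finally the same iteration argument using $\Delta_f$ and the semiconjugacy to upgrade it to the whole leaf. The one point you flag as a possible obstacle -- uniform $C^0$-continuity of $g\mapsto H_g$ on all of $G$ -- is indeed what licenses $\mathcal{U}_3(f)$, and follows from $\Gamma$-periodicity of $H_g-\textnormal{id}$ together with compactness of a fundamental domain, exactly as the paper silently uses.
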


The previous proposition shows that $\sigma$-properness is $C^{1}$-open in $\textnormal{PH}_{A}(M)$. To finish the theorem we have to prove that it is also a $C^{1}$-closed property. This is the most difficult part of the theorem. For the proof we are going to use Theorem \ref{tcdi} so we have to add the hypothesis of SADC. This doesn't represent any problem since we already know that SADC is open and closed by Proposition \ref{tsadcayc}. 

\begin{teo} \label{tsadcypayc}
Being SADC and $\sigma$-proper is a $C^{1}$-closed property in $\textnormal{PH}_{A}(M)$.
\end{teo}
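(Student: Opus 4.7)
The plan begins with the observation that SADC is already $C^1$-closed by Proposition \ref{tsadcayc}, so if $f_n \to f$ in $C^1$ with each $f_n$ both SADC and $\sigma$-proper, then $f$ is automatically SADC. Hence the entire content of the theorem reduces to showing that $\sigma$-properness survives the limit under the SADC hypothesis. By Lemma \ref{equipro} together with Remark \ref{iis}, it is enough to verify condition $(I^{uu})$ for $f$ (the case $(I^{ss})$ being symmetric); that is, $H^{uu}_f$ must remain injective on every $\tilde{\mathcal{W}}^{uu}_f$-leaf. A key auxiliary ingredient is that each $f_n$ is dynamically coherent and center fibered by Corollary \ref{ccdi}, so the identity $\tilde{\mathcal{W}}^{cs}_{f_n}(z) = H_{f_n}^{-1}(\tilde{\mathcal{W}}^{cs}_A(H_{f_n}(z)))$ and the GPS between $\tilde{\mathcal{W}}^{cs}_{f_n}$ and $\tilde{\mathcal{W}}^{uu}_{f_n}$ from Theorem \ref{tcdi} are available for all $n$.

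The argument is by contradiction. Suppose there exist distinct $x, y \in G$ on the same $\tilde{\mathcal{W}}^{uu}_f$-leaf with $H_f(x)$ and $H_f(y)$ on the same $\tilde{\mathcal{W}}^{cs}_A$-leaf. I would first exploit the $C^1$-convergence of the strong unstable foliations on compact sets, together with the uniform $C^0$-convergence $H_{f_n} \to H_f$ (which follows from the Franks construction and the fact that each $H_{f_n}$ is at uniformly bounded Hausdorff distance from the identity), to produce, for every large $n$, a point $y_n \in \tilde{\mathcal{W}}^{uu}_{f_n}(x)$ with $y_n \to y$. Then $H_{f_n}(x) \to H_f(x)$ and $H_{f_n}(y_n) \to H_f(y)$, and the unstable projection $\Pi^{uu}(H_{f_n}(y_n))$ onto $\tilde{\mathcal{W}}^{uu}_A(H_{f_n}(x))$ tends to $H_f(x)$. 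On the other hand, property $(I^{uu})$ for $f_n$ forces $H_{f_n}(x)$ and $H_{f_n}(y_n)$ to lie on distinct $\tilde{\mathcal{W}}^{cs}_A$-leaves, so this unstable projection is strictly separated from $H_{f_n}(x)$ but the separation tends to zero.

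The main obstacle is that this pointwise collapse is not yet a contradiction, since the $uu$-properness constants $\delta_n$ of $f_n$ need not be uniform in $n$. To rectify this, I would iterate forward: the pairs $(\tilde{f}^k(x), \tilde{f}^k(y))$ have exponentially growing $\tilde{\mathcal{W}}^{uu}_f$-distance while $A^k(H_f(x))$ and $A^k(H_f(y))$ stay on a common $\tilde{\mathcal{W}}^{cs}_A$-leaf at only sub-exponential internal distance. Repeating the approximation for each iterated pair yields, via $(I^{uu})$ and the fibering identity for $f_n$, a strictly positive $\tilde{\mathcal{W}}^{uu}_A$-displacement between the $cs$-leaves of $H_{f_n}(\tilde{f}^k(x))$ and $H_{f_n}(y_n^{(k)})$; by the bounded-distance property of $H_{f_n}$ and the exponential expansion of $A$ along $E^{uu}_A$, this displacement can be quantified from below by the $\tilde{\mathcal{W}}^{uu}_{f_n}$-distance between $\tilde{f}^k(x)$ and $y_n^{(k)}$. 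Choosing $k$ then $n$ in a diagonal fashion, one retains a definite unstable component in the limiting separation of $H_f(x)$ from $H_f(y)$, contradicting $H_f(y) \in \tilde{\mathcal{W}}^{cs}_A(H_f(x))$. Carrying out this diagonal limit cleanly, with uniform control of the unstable displacement as the unstable separation grows, is the heart of the proof.
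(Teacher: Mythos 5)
Your reduction is sound as far as it goes: SADC closedness is Proposition \ref{tsadcayc}, Corollary \ref{ccdi} gives that each $f_n$ is dynamically coherent and center fibered, and by Lemma \ref{equipro} together with Remark \ref{iis} it suffices to establish $(I^{uu})$ for $f$. The genuine gap is in the mechanism you propose for the contradiction. You correctly identify that the $uu$-properness constants $\delta_n$ of the approximants need not be uniform, but forward iteration does not repair this. The difficulty is structural: to exploit $uu$-properness of $f_n$ at a scale $\delta_n$ you must push the pair $(\tilde{f}^k(x), y_n^{(k)})$ forward until its $\tilde{\mathcal{W}}^{uu}_{f_n}$-distance exceeds $\delta_n$, which forces $k = k(n) \to \infty$ if $\delta_n$ is unbounded; but then pulling the resulting unit $\tilde{\mathcal{W}}^{uu}_A$-displacement back by $A^{-k(n)}$ contracts it to size $\lambda^{-k(n)} \to 0$, precisely cancelling the gain. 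The ``strictly positive displacement, quantified from below by the $\tilde{\mathcal{W}}^{uu}_{f_n}$-distance'' you invoke \emph{is} $uu$-properness of $f_n$, with its own $n$-dependent constant, so the argument is circular at the crucial quantitative step. You candidly flag the diagonal limit as ``the heart of the proof,'' and indeed that is where the proposal stops rather than closes.

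What the paper does instead, and what your approach is missing, is a \emph{uniform} lower bound that does not come from the properness of the $f_n$ at all: it comes from local product structure. By $C^1$-convergence one builds a finite cover by LPS boxes that works simultaneously for $f$ and all $f_k$ with $k$ large, and this yields a single $\delta > 0$ such that any $\epsilon$-disk tangent to a cone around $E^{uu}_f$ has $H^{uu}_k$-image containing a $\delta$-disk in $\tilde{\mathcal{W}}^{uu}_A$. Chaining these boxes shows $H^{uu}_k$ maps $\tilde{\mathcal{W}}^{uu}_f$-leaves onto $\tilde{\mathcal{W}}^{uu}_A$-leaves, and a covering-space argument upgrades this to global product structure between $\tilde{\mathcal{W}}^{uu}_f$ and $\tilde{\mathcal{W}}^{cs}_k$. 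Once that GPS is in hand, the uniform bound $d_{C^0}(H_k, H_f) < K_0$ transfers the $uu$-properness estimate for $f_k$ directly to a properness estimate for $f$ at each point, and a last $\Gamma$-periodicity plus upper semicontinuity argument makes the constant uniform over a compact fundamental domain (a step your proposal also omits). So the route is not a pointwise contradiction with iteration, but a quantitative GPS statement between the unstable foliation of the limit and the center-stable foliations of the approximants, underwritten by a uniform LPS constant; that is the idea you would need to add.
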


\begin{proof}
Take $\{f_{k}\}\subset \textnormal{PH}_{A}(M)$ with $f_{k}\to f$ in the $C^{1}$ topology, such that $f_{k}$ is SADC and $\sigma$-proper for every $k \in \mathbb{N}$. By Proposition \ref{tsadcayc} we know that $f$ is also SADC. We have to prove that $f$ is $\sigma$-proper. We can assume that $\sigma=uu$, case $\sigma=ss$ is symmetric.

Note that every $f_{k}$ is in the hypothesis of Theorem \ref{tcdi}, then for every $k\in \mathbb{N}$ there is a $f_{k}$-invariant foliation $\mathcal{W}^{cs}_{f_k}$ tangent to $E^{ss}_{f_k}\oplus E^{c}_{f_k}$ which verifies
\begin{equation}
\tilde{\mathcal{W}}^{cs}_{f_k}(x)=H_{f_k}^{-1}(\tilde{\mathcal{W}}^{cs}_{A}(H_{f_k}(x)))
\end{equation} 
From now on, we are going to note the sub indexes by $k$ instead of $f_{k}$, i.e. $H^{uu}_{k}=\Pi^{uu}\circ H_{f_{k}}$. With this notation the previous property is equivalent to 
\begin{equation} \label{ecscf}
H^{uu}_{k}(x)=H^{uu}_{k}(y) \ \ \textnormal{if and only if} \ \ y\in \tilde{\mathcal{W}}^{cs}_{k}(x).
\end{equation}
\begin{afi}
Given $\epsilon >0$, there exists $\delta>0$, a cone field $\mathscr{C}^{uu}$ around $E^{uu}_{f}$ and  $k_0$ such that if $k \geq k_0$ and $D$ is a disk tangent to $\mathscr{C}^{uu}$ of internal radius larger than $\epsilon$ and centered at $x$, then
$$D^{uu}_{A}(H_k(x),\delta)\subset H^{uu}_{k}(D)
$$ 
\end{afi}
\begin{proof}
This is because $f_k \to f$ in the $C^{1}$ topology implies that $E^{\sigma}_{k}\to E^{\sigma}_{f}$ for every $\sigma$. Then $f$ has a finite cover of LPS boxes $B$ of size smaller than $\epsilon$ such that for $k \geq k_{0}$ large enough, these are LPS boxes for $f_{k}$ too. We can take these boxes $B$ small enough to have:
\begin{itemize}
\item The boxes $2B$ and $3B$ are also LPS boxes for $f_k$.
\item For every $B$ of the covering and every disk $D$ tangent to $\mathscr{C}^{uu}$ of internal radius larger than $\epsilon$ and centered at a point $x\in B$ we have that $D$ intersects in a unique point in $3B$ every center-stable plaque of $\mathcal{W}^{cs}_{k}$ which intersects $2B$.
\end{itemize} 
The previous condition plus equation \ref{ecscf} implies that $H^{uu}_{k}(2B)\subset H^{uu}_{k}(D)$.
Using the injectivity of $H_{k}$ when restricted to $\tilde{\mathcal{W}}^{uu}_{k}$ leaves, we have that given a connected component $2B$ of the lift of a LPS box we have $int(H^{uu}_{k}(2B)) \neq \emptyset
$ and every point $x \in B$ lies in the interior of $H^{uu}_{k}(2B)$. Since there are finite boxes, there is a uniform $\delta$ such that  $H^{uu}_{k}(B)$ is at bounded $\delta$ distance from the boundary of  $H^{uu}_{k}(2B)$ independently of the box $B$.
We deduce that every disk $D$ of internal radius $\epsilon$ and centered at $x$ and tangent to a small cone around $E^{uu}_{f}$ verifies that $H^{uu}_{k}(D)$ contains $D^{uu}_{A}(H_{k}(x),\delta)$ as desired.  
\end{proof}

\begin{afi}
For $k$ sufficiently large enough and for every pair of points $x,y \in G$, we have that $\tilde{\mathcal{W}}^{uu}_{f}$ and $\tilde{\mathcal{W}}^{cs}_{k}$ have non trivial intersection. 
\end{afi}

\begin{proof}
Given two points $x,y \in G$, take a curve in $\tilde{\mathcal{W}}^{uu}_{A}(H_{k}(x))$ that connects 
$H^{uu}_{k}(x)$ and $H^{uu}_{k}(y)$. We can assume that the curve is contained in $m$ local product structure boxes $B_{1},...,B_{m}$ like in the previous claim such that $B_{i}\cap B_{i+1}\neq \emptyset$. Taking $D=D^{uu}_{f}(x,\epsilon)$ we have that $H^{uu}_{k}(D^{uu}_{f}(x,\epsilon))$ contains $B_{1}$ and then $H^{uu}_{k}(D^{uu}_{f}(x,2\epsilon))$ contains $B_{1}\cup B_{2}$. Inductively we have that $H^{uu}_{k}(D^{uu}_{f}(x,m\epsilon))$ contains $B_{1}\cup...\cup B_{m}$ and in particular $H^{uu}_{k}(y)$. This means there is $p \in D^{uu}_{f}(x,m\epsilon)$ such that $H^{uu}_{k}(p)=H^{uu}_{k}(y)$ or equivalently $H_{k}(p) \in \tilde{\mathcal{W}}^{cs}_{A}(H_{k}(y))$. By equation \ref{ecscf} we have that $p \in D^{uu}_{f}(x,m\epsilon)\cap \tilde{\mathcal{W}}^{cs}_{k}(y)$. 
\end{proof}

\begin{afi}
For $k$ sufficiently large enough, the foliations $\tilde{\mathcal{W}}^{uu}_{f}$ and $\tilde{\mathcal{W}}^{cs}_{k}$ have GPS. Equivalently, the map $H^{uu}_{k}|_{\tilde{\mathcal{W}}^{cs}_{f}(x)}:\tilde{\mathcal{W}}^{cs}_{f}(x) \to \tilde{\mathcal{W}}^{uu}_{A}(H_{f}(x))$ is a homeomorphism.
\end{afi}
\begin{proof}
By the previous claim, we only have to prove that the intersection between $\tilde{\mathcal{W}}^{uu}_{f}(x)$ and $\tilde{\mathcal{W}}^{cs}_{k}(y)$ is unique for every pair of points $x,y$. Since the leaf $\tilde{\mathcal{W}}^{uu}_{f}(x)$ intersects transversely  $\tilde{\mathcal{W}}^{cs}_{k}(y)$ for every $x,y$ and $H_{k}(\tilde{\mathcal{W}}^{cs}_{k}(y))=\tilde{\mathcal{W}}^{cs}_{A}(H_{k}(y))$ we have that $H_{k}(\tilde{\mathcal{W}}^{uu}_{f}(x))$ is topologically transverse to $\tilde{\mathcal{W}}^{cs}_{A}(H_{k}(y))$. This implies that  
$$\Pi^{uu}:H_{k}(\tilde{\mathcal{W}}^{uu}_{f}(x))\to \tilde{\mathcal{W}}^{uu}_{A}(H_{f}(x))
$$
is a covering and since $H_{k}(\tilde{\mathcal{W}}^{uu}_{f}(H_{f}))$ is contractible we know it's injective. This proves that $H_{k}^{uu}$ when restricted to $\tilde{\mathcal{W}}^{uu}_{f}(x)$ is a homeomorphism onto $\tilde{\mathcal{W}}^{uu}_{A}(H_{f}(x))$ and equivalently the foliations $\tilde{\mathcal{W}}^{uu}_{f}$ and $\tilde{\mathcal{W}}^{cs}_{k}$ have GPS.
\end{proof} 
To finish the proof of the theorem we must prove there is $R>0$ such that
$$ (H^{uu}_{f})^{-1}(D^{uu}_{A}(H_{f}(x),1))\cap \tilde{\mathcal{W}}^{uu}_{f}(x)\subset D^{uu}_{f}(x,R) \ \ \ \forall x\in G
$$
Fix $x\in G$. We know that $d_{C^{0}}(H_{k},H_{f})<K_{0}$. The previous claim and the fact that $f_{k}$ is CF implies that the restriction of $H^{uu}_{k}$ to $\tilde{\mathcal{W}}^{uu}_{f}(x)$ is a homeomorphism onto $\tilde{\mathcal{W}}^{uu}_{A}(H_{f}(x))$, then there is $R_{1}>0$ such that 
$$H^{uu}_{k}((D^{uu}_{f}(x,R_{1}))^{c})\cap D^{uu}_{A}(H_{k}(x),1+2K_{0})=\emptyset
$$
Take $y \in D^{uu}_{f}(x,R_{1})^{c}$ then applying the triangular inequality we obtain  
\begin{eqnarray*}
1+2K_{0} & < & d(H^{uu}_{k}(x),H^{uu}_{k}(y)) \\
& \leq & d(H^{uu}_{k}(x),H^{uu}_{f}(x))+d(H^{uu}_{f}(x),H^{uu}_{f}(y))+d(H^{uu}_{f}(y),H^{uu}_{k}(y)) \\
& < &  K_{0} +d(H^{uu}_{f}(x),H^{uu}_{f}(y))+ K_{0}
\end{eqnarray*}
Thus $d(H^{uu}_{f}(x),H^{uu}_{f}(y)) >1$ and therefore we get 
$$H^{uu}_{k}((D^{uu}_{f}(x,R_{1}))^{c})\cap D^{uu}_{A}(H_{k}(x),1)=\emptyset
$$
which implies that
$$(H^{uu}_{f})^{-1}(D^{uu}_{A}(H_{f}(x),1))\cap \tilde{\mathcal{W}}^{uu}_{f}(x)\subset D^{uu}_{f}(x,R_{1})
$$
We have proved that the function $\varphi$ is well defined where $$\varphi(x)=\inf \{ R>0: (H^{uu}_{f})^{-1}(D^{uu}_{A}(H_{f}(x),1))\cap \tilde{\mathcal{W}}^{uu}_{f}(x)\subset D^{uu}_{f}(x,R)\}$$ If we prove that $\varphi$ is uniformly bounded in $G$ then by remark \ref{osp1} we get the theorem. Since $\varphi$ is $\Gamma$-periodic, it's enough to restrict ourselves to points in a fundamental domain, which is compact. Thus, it is enough to show that if $x_{n}\to x$ then $\varphi(x_{n})\leq \varphi (x)$. 
To prove this, note that $H^{\sigma}_{f}(D^{\sigma}_{f}(x,\varphi(x)))$ contains $D^{\sigma}_{A}(H_{f}(x),1)$. Now for every $\epsilon >0$ we can find $\delta>0$ such that
$$D^{\sigma}_{A}(H_{f}(x),1+\delta)\subset \Pi^{\sigma} \circ H_{f}(D^{\sigma}_{f}(x),\varphi(x)+\epsilon)
$$
By continuous variation of the $\tilde{\mathcal{W}}^{\sigma}$-leaves and the continuity of the functions $\Pi^{\sigma}$ and $H_{f}$ we deduce that for $n$ large enough $H^{\sigma}_{f}(D^{\sigma}_{f}(x_n,\varphi(x)+\epsilon)$ contains $D^{\sigma}_{A}(H_{f}(x_n),1)$. This shows that $\limsup \varphi(x_n)\leq \varphi(x)+\epsilon$. Since the choice of $\epsilon >0$ was arbitrary, we get the desire result. 
\end{proof}

From the previous results we obtain the following theorem.

\begin{teo}
Let $f\in \textnormal{PH}_{A}(M)$ be a diffeomorphism in the same connected component of a partially hyperbolic diffeomorphism $g$ which is $\sigma$-proper ($\sigma=uu,ss$) and SADC. 
Then $f$ is dynamically coherent and center fibered. 
\end{teo}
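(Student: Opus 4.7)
The plan is to assemble the previous results and run a standard connectedness argument. Define
\[
\mathcal{S} \;=\; \bigl\{\, h \in \textnormal{PH}_{A}(M) : h \text{ is SADC and both } uu\text{- and } ss\text{-proper}\,\bigr\}.
\]
By hypothesis $g \in \mathcal{S}$, and by Corolary \ref{ccdi} every element of $\mathcal{S}$ is dynamically coherent and center fibered. Hence it suffices to show that $f \in \mathcal{S}$, for which I would prove that $\mathcal{S}$ is $C^{1}$-open and $C^{1}$-closed inside $\textnormal{PH}_{A}(M)$ and invoke the fact that $f$ and $g$ lie in the same connected component.

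First I would argue openness: SADC is $C^{1}$-open by Proposition \ref{tsadcayc}, and $\sigma$-properness is $C^{1}$-open by Proposition \ref{pspopen} for $\sigma = ss, uu$. Since $\mathcal{S}$ is the intersection of three $C^{1}$-open sets in $\textnormal{PH}_{A}(M)$, it is $C^{1}$-open. Next I would argue closedness: Theorem \ref{tsadcypayc} says exactly that the joint property of being SADC and $\sigma$-proper is $C^{1}$-closed in $\textnormal{PH}_{A}(M)$, applied once with $\sigma = uu$ and once with $\sigma = ss$; intersecting the two closed sets yields that $\mathcal{S}$ is $C^{1}$-closed.

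Once $\mathcal{S}$ is both open and closed, it is a union of connected components of $\textnormal{PH}_{A}(M)$. The hypothesis that $f$ and $g$ belong to the same connected component then forces $f \in \mathcal{S}$. Applying Corolary \ref{ccdi} to $f$ concludes that $f$ is dynamically coherent and center fibered.

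There is essentially no obstacle here, since all the nontrivial work has been absorbed into the earlier statements; the only point that deserves care is to verify that the two pieces of $\sigma$-properness ($\sigma = ss$ and $\sigma = uu$) are handled symmetrically, so that both Proposition \ref{pspopen} and Theorem \ref{tsadcypayc} apply in the two separate cases and can then be intersected. Everything else is a purely topological argument about clopen subsets of $\textnormal{PH}_{A}(M)$.
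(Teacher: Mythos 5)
Your proof is correct and follows essentially the same route as the paper: apply Propositions \ref{tsadcayc} and \ref{pspopen} together with Theorem \ref{tsadcypayc} to conclude that the set of SADC and $\sigma$-proper ($\sigma=ss,uu$) diffeomorphisms is clopen, hence a union of connected components, and then invoke Corolary \ref{ccdi}. Your explicit definition of $\mathcal{S}$ and the citation of Corolary \ref{ccdi} (rather than the one-sided Theorem \ref{tcdi}, which the paper cites) make the conclusion slightly cleaner, but the argument is the same.
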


\begin{proof}
Propositions \ref{tsadcayc}, \ref{pspopen} and Theorem \ref{tsadcypayc} tell us that $\sigma$-proper and SADC are open and closed properties in the $C^{1}$ topology among $\textnormal{PH}_{A}(M)$. This implies that any diffeomorphism $f$ in the same connected component of a partially hyperbolic diffeomorphism $g$ which is $\sigma$-proper for $\sigma=ss,uu$ and SADC, is in hypothesis of Theorem \ref{tcdi}. Then $f$ is dynamically coherent and center fibered.
\end{proof}

\begin{proof}[Proof of Theorem \ref{teoA}]
To prove the theorem it's enough to show that any diffeomorphism $f \in \textnormal{PH}_{A}^{0}(M)$ which is dynamically coherent and center fibered, it has to be SADC and $\sigma$-proper for $\sigma=ss,uu$. 

Take a diffeomorphism $f$ in this setting. Since $f$ is dynamically coherent we have the existence of center-stable and center-unstable foliations $\mathcal{W}^{cs}_{f}$, $\mathcal{W}^{cu}_{f}$.
We know that this foliations are uniformly transverse to $E^{uu}_{f}$ and $E^{ss}_{f}$ respectively. To prove that $f$ is SADC it remains to show that $\mathcal{W}^{cs}_{f}$ and $\mathcal{W}^{cu}_{f}$ are almost parallel to the center-stable and center-unstable foliations of $A$. We are going to prove it for the $cs$ case, the other one is symmetric.

First of all note that we can think in the center-stable leaf $\tilde{\mathcal{W}}^{cs}_{f}(x)$ as the set of points $y \in G$, which can be connected to $x$, concatenating paths each one contained in either $\tilde{\mathcal{W}}^{ss}_{f}$ or $\tilde{\mathcal{W}}^{c}_{f}$. That means, for every $y\in G$ there are finite paths $\alpha_{1}, \dots,\alpha_{n}$, such that $\alpha_{i}\subset \tilde{\mathcal{W}}^{ss}_{f}$ or $\alpha_{i}\subset \tilde{\mathcal{W}}^{c}_{f}$, and also the path $\alpha_{1}*\dots * \alpha_{n}$ connects $x$ to $y$. Suppose that $y \in \tilde{\mathcal{W}}^{cs}_{f}(x)$. We can assume by simplicity that $n=2$ (and then applying induction argument). Then, there is a point $z\in \tilde{\mathcal{W}}^{ss}_{f}(x)\cap \tilde{\mathcal{W}}^{c}_{f}(y)$, and two paths $\alpha_{1}\subset \tilde{\mathcal{W}}^{ss}_{f}(x)$ and $\alpha_{2}\subset \tilde{\mathcal{W}}^{c}_{f}(y)$ such that the path $\alpha_{1}*\alpha_{2}$ connects $x$ and $y$ through the point $z$. Since $z\in \tilde{\mathcal{W}}^{ss}_{f}(x)\cap \tilde{\mathcal{W}}^{c}_{f}(y)$ and $f$ is CF we have that $H(y)\subset \tilde{\mathcal{W}}^{c}_{A}(H(z))=L_{H(z)}G^{c}$. Then there exists $g^{c}\in G^{c}$ such that $H(y)=H(z)g^{c}$. On the other hand $H(z)\in \tilde{\mathcal{W}}^{s}_{A}(H(x))=L_{H(x)}G^{s}$, hence there is $g^{s}\in G^{s}$ such that $H(z)=H(x)g^{s}$. We conclude that $H(y)=H(x)g^{s}g^{c}\in L_{H(x)}G^{cs}=\tilde{\mathcal{W}}^{cs}_{A}(H(x))$ because $G^{cs}$ is a subgroup. Since the point $y\in G$ was arbitrary we obtain $H_{f}(\tilde{\mathcal{W}}^{cs}_{f}(x))\subset \tilde{\mathcal{W}}^{cs}_{A}(H_{f}(x))$. 

Once again, since $f$ is CF we have that both $H_{f}$ and $\Pi^{\sigma}\circ H_{f}$ are injective when restricted to $\tilde{\mathcal{W}}^{\sigma}_{f}$-leaves. Hence, $f$ verifies $(I^{\sigma})$ and also $(S^{\sigma})$ by remark \ref{iis}. Then by Lemma \ref{equipro} $f$ is $\sigma$-proper. 

To end the proof, we observed that $(S^{\sigma})$ and CF implies that 
$\tilde{\mathcal{W}}^{cs}_{A}(H_{f}(x))\subset H_{f}(\tilde{\mathcal{W}}^{cs}_{f}(x))$.
We conclude that 
$$ H(\tilde{\mathcal{W}}^{cs}_{f}(x))= \tilde{\mathcal{W}}^{cs}_{A}(H_{f}(x))$$
Since the center-unstable case is completely symmetric we obtain 
$$ H(\tilde{\mathcal{W}}^{cu}_{f}(x)) = \tilde{\mathcal{W}}^{cu}_{A}(H_{f}(x))$$
and from this we have SADC because $H_{f}$ is at bounded distance from the identity.  
\end{proof}

\end{document}